\documentclass{article}
\usepackage{graphicx} 
\usepackage{amsmath}
\usepackage{amsthm}
\usepackage{amssymb}
\usepackage{xcolor}
\usepackage[hidelinks]{hyperref}
\newtheorem{theorem}{Theorem}
\newtheorem{lemma}[theorem]{Lemma}

\theoremstyle{definition}
\newtheorem{definition}{Definition}[section]
\title{On the distribution of the minimal length of addition chains}
\author{Jean-Marie De Koninck, Nicolas Doyon, William Verreault}
\date{}

\begin{document}

\maketitle

\begin{abstract}

A sequence of integers $1=a_0<a_1<\cdots<a_k=n$ is called an addition chain of length $k$ if $a_j=a_s+a_t$ with $0\le s,t<j$ for all integers $j\in \{1,2,\ldots,k\}$. We denote by $\ell(n)$ the minimal length of an addition chain leading to $n$. Here we investigate the distribution of the function $\ell$ through the counting function
$$
F(m,r):=\#\{n\in [2^m,2^{m+1}):\ell(n)\le m+r\}
$$
and show that, for every fixed $0<c<\log 2$, there exist
positive constants $K_1$ and $K_2$
such that
$$
K_1^r m^r\le F(m,r)\le K_2^r m^r
$$
for all sufficiently large $m$ and all integers
$m^{0.9}<r\le cm/\log m$. The upper bound also holds for every integer $r>m^{0.9}$. Moreover, denoting by $G(m,r)$ the number of \emph{distinct} addition chains of length $m+r$ leading to an integer $n\in [2^m, 2^{m+1})$, we show that there exist positive constants $K_3$ and $K_4$ such that
$$
K_3^r \left(\frac{m^2}{r}\right)^r\le G\left(m,r\right)\le  K_4^r \left(\frac{m^2}{r} \right)^r
$$
provided $m^{0.9}<r<m$. This improves and generalizes previous results on the minimal length of addition chains and addresses a question raised by Paul Erd\H{o}s.
\end{abstract}

\section{Introduction}
We say that a sequence of integers $1=a_0<a_1<\cdots<a_k=n$ is an {\it addition chain} of length $k$ if $a_j=a_s+a_t$ with $0\le s,t<j$ for all $j,\, 1\le j\le k$. In other words, an element of an addition chain must be obtained by adding two previous ones.

For example, the following is an addition chain:
\begin{eqnarray*}
a_0&=&1,\\
a_1&=&a_0+a_0=1+1=2,\\
a_2&=&a_1+a_1=2+2=4,\\
a_3&=&a_2+a_0=4+1=5,\\
a_4&=&a_3+a_2=5+4=9,\\
a_5&=&a_4+a_0=9+1=10.
\end{eqnarray*}

We denote by $\ell(n)$ the shortest possible length of an addition chain leading to $n$.  For instance, if $n=10$, the following addition chain leads to $10$: $a_0=1$, $a_1=1+1=2$, $a_2=2+2=4$, $a_3=4+4=8$, $a_4=8+2=10$.  One can easily show that there exists no shorter addition chain leading to $10$, thereby proving that $\ell(10)=4$.

The minimal length of addition chains has been investigated by several authors, in particular (in chronological order) by Scholz \cite{scholz}, Brauer \cite{brauer}, Erd\H{o}s \cite{erdos}, Thurber \cite{thurber2}, Sch\"onhage \cite{schonhage}, Thurber \cite{thurber1}, Bahig \& Nakamula \cite{bahig}, Clift \cite{clift}, Tall \cite{tall} and De Koninck, Doyon \& Verreault \cite{DKN}. A partial survey of the study of addition chains is presented in section C6 of the book of Richard Guy \cite{guy}. Generalizations of the concept of addition chains have also been investigated by a few authors such as Elias and McKenzie \cite{Elias}, and Järvinen, Dimitrov and Azarderakhsh \cite{Jarvinen}, who considered the case where several of the previous integers can be added at each step. Addition chains also arise naturally in the efficient computation of powers, since each step corresponds to a multiplication. For example,
continued fraction methods for constructing short addition chains were developed by Bergeron, Berstel and Brlek \cite{BBB}. Related algorithmic questions continue to be investigated for constrained families of chains such as differential addition chains arising in elliptic curve computations \cite{BCL}.

The investigation of the function $\ell(n)$ began with Arnold Scholz, who in 1937 formulated \cite{scholz} what is now known as the {\it Scholz conjecture} (and at times as the {\it Scholz--Brauer conjecture}), which states that
$$\ell(2^{n+1}-1)\le n +\ell(n+1).$$

In 1939, Alfred Brauer \cite{brauer} obtained the upper bound
$$\ell(n) \le \min_{1\le r \le n} \left\{ \left( 1+ \frac 1r \right) \log_2 n +2^r -2 \right\}$$
where $\log_2 n$ denotes the base 2 logarithm of $n$.
Choosing $\displaystyle{ r =\left\lfloor (1-\varepsilon) \frac{\log \log n}{\log 2}  \right\rfloor}$, and then relabelling $\varepsilon$, Erd\H{o}s deduced \cite{erdos} that for $n$ sufficiently large
\begin{equation*}
\ell(n) \le \log_2 n +(1+\varepsilon) \frac{\log n}{\log \log n}
\end{equation*}
for any real $\varepsilon>0$.

Bounds that depend on the number of digits equal to 1 in the binary expansion of $n$ (here denoted by $\nu(n)$) have also been established.
In 1975, Arnold Sch\"onhage \cite{schonhage} proved that
$$\ell(n)\ge \log_2 n + \log_2 \nu(n) -2.13.$$

An important result on the typical length of addition chains was obtained by Paul Erd\H{o}s in 1960 as he showed \cite{erdos} that for almost all $n$ (that is, for all $n$ except perhaps on a set of density 0),
$$\ell(n) = \log_2 n + (1+o(1)) \frac{\log n}{\log \log n} \qquad (n\to \infty).$$
Erd\H{o}s also raised the question of obtaining more precise estimates for $\ell(n)$ and, in particular, an asymptotic distribution function. In this work, we provide quantitative information on the lower tail of this distribution. 

Here, we further investigate the distribution of the function $\ell(n)$ through the counting function
$$
F(m,r):=\#\left\{n\in [2^m, 2^{m+1}):\ell(n)\leq m+r\right\},
$$
where $m$ and $r$ are positive integers. Since $\ell(n)\ge \log_2 n$ and $m\le \log_2 n<m+1$ on this interval, it is natural to study the excess $\ell(n)-m$. The normalized count $2^{-m}F(m,r)$ is precisely the distribution function of this excess when $n$ is chosen uniformly among the integers in $[2^m,2^{m+1})$. Erd\H{o}s' theorem shows that the
excess is typically asymptotic to $m\log 2/\log m$.
Thus, the range $r\le cm/\log m$ with $c<\log 2$ describes integers admitting addition chains shorter than the typical minimal length.
The use of a dyadic interval $[2^m,2^{m+1})$ is also motivated by the fact that {\it doubling steps}, that is, steps for which
$$
a_{j+1}=2a_j,
$$
play a critical role in the study of addition chains.
Investigating the function $F(m,r)$ in the critical domain $r=\lfloor cm/\log m \rfloor$, the authors previously showed \cite{DKN} that
$$
F\left(m,\left\lfloor \frac{cm}{\log m}\right\rfloor\right)\le \exp\left(cm+o\left(\frac{m\log\log m}{\log m}\right)\right)
$$
for any positive constant $c<\log 2$ as $m\to\infty$.
In the same paper, they showed that for any $\varepsilon>0$,  we have for $m$ large enough
$$
F\left(m,\left\lfloor \frac{cm}{\log m}\right\rfloor\right)\ge \exp\left(cm-\frac{c(1+\varepsilon) m\log\log m}{\log m}\right).
$$
Here, we refine and generalize these bounds to  obtain matching upper and lower estimates.

For a given integer $n$, there can be several distinct addition chains of minimal length leading to $n$.  For example, if $n=5$, we have that $(1,2,4,5)$ and $(1,2,3,5)$  are two addition chains of minimal length, and it is easy to see that there are no others. To capture this, we define the function $G(m,r)$ as the number of distinct addition chains of length  $ m+r$, where $r$ is a positive integer, leading to an integer $n\in [2^m, 2^{m+1})$. 
Even though it has been less studied, the function $G(m,r)$   is of much interest since it appears naturally when investigating the function $F(m,r)$.

In our main two theorems, we provide lower and upper bounds  for the functions $G$ and $F$.

\begin{theorem} \label{t1} Assuming that $m^{0.9}<r<m$ with $m$ and $r$ sufficiently large, there exist  computable constants $K_3>0$  and $K_4>0$ such that
$$
K_3^r \left(\frac{m^2}{r}\right)^r\le G(m,r)\le K_4^r \left(\frac{m^2}{r}\right)^r.
$$
\end{theorem}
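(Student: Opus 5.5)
We will prove both bounds by classifying the steps of an addition chain $1=a_0<a_1<\cdots<a_{m+r}=n$ with $n\in[2^m,2^{m+1})$ into \emph{doubling steps} ($a_{j}=2a_{j-1}$) and \emph{non-doubling steps}. Every step at most doubles the current maximum, so $a_j\le 2^j$. A non-doubling step $a_j=a_s+a_t$ with $a_s+a_t<2a_{j-1}$ satisfies $a_j\le \tfrac32 a_{j-1}$. Let $d$ be the number of non-doubling steps. Since $n\ge 2^m$ and the chain has $m+r$ steps, we get $2^{m+r-d}(3/2)^d\ge 2^m$. This forces $d\le r/\log_2(4/3)=O(r)$, so there are $\Theta(r)$ or fewer steps that deviate from doubling.

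\textbf{Upper bound.} We encode a chain by the positions $j_1<\dots<j_d$ of its non-doubling steps, together with, for each such step, the pair $(s,t)$ of indices used. Choosing the positions costs at most $\binom{m+r}{d}\le (e(m+r)/d)^d$. The pair $(s,t)$ naively costs $j^2\le (2m)^2$, which would give $(m^3/r)^r$, too many by a factor $m^r$. To remove this factor we observe that if $a_j=a_s+a_t$ with $t$ much smaller than $j$, then $a_t$ is tiny relative to $a_{j-1}$, whereas the larger summand $a_s$ must be close to $a_{j-1}$. Precisely, $a_s\ge a_j/2> a_{j-1}/2$, and each index step back below $j-1$ loses a factor coming from doubling steps. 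So $s$ lies within $O(1+\#\{\text{non-doublings in } (s,j)\})$ positions of $j-1$ unless the doublings are interrupted. We therefore bound the number of choices for $s$ by something like $2^{O(\text{local count})}$, which summed over the chain contributes only $K^r$. The choice of $t$ is free, giving at most $m+r$ options each. The total is then
$$G(m,r)\le \sum_{d\le Cr}\binom{m+r}{d}(2m)^dK^{d}\le K_4^r\left(\frac{m^2}{r}\right)^r,$$
since the sum is dominated by $d\asymp r$ and $r<m$. Making the "$s$ is nearly forced" argument rigorous is the main obstacle. The plan is a charging argument: we define the gap $g_j=j-1-s$. Because $a_s\le 2^{-(\text{number of doublings in }(s,j-1])}a_{j-1}$ up to a factor $(3/2)^{\#\text{non-doublings}}$, we get $\sum_j g_j=O(r)$ after amortizing. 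The number of choices of gap sequences with sum $O(r)$ is then $2^{O(r)}$.

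\textbf{Lower bound.} We construct chains explicitly. We take $d=\lfloor r/2\rfloor$ (say) of the steps to be "add a small earlier element" steps and the rest doublings. Concretely, we start with $1,2,4,\dots,2^{L}$ for some $L\asymp \log m$, or just use $1,2$. Then we perform $m+r-d$ doublings interspersed with $d$ steps of the form $a_j=a_{j-1}+a_t$ with $t\le j-1$ arbitrary among early indices. Choosing the positions gives $\binom{m}{d}\ge (m/d)^d$, and choosing $t\in[1,m/2]$ for each gives $(m/2)^d$, for a product of $(cm^2/r)^{d}$. We then adjust the remaining $r-d$ extra steps. For instance, we insert them as additional distinct non-doubling steps in the same manner so that all $r$ deviations carry $\asymp m$ choices each. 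We must then ensure that the endpoint stays in $[2^m,2^{m+1})$. Each step $a_{j-1}+a_t$ with $t\le j-2$ multiplies by a factor in $(1,3/2]$, so we control the count of doublings so that the final value lands in the dyadic interval. The number of doublings is then $m-O(1)$ adjusted according to the product of these factors, and we restrict to $a_t\le a_{j-1}/m$ so that the factors are $1+O(1/m)$ and the product is $e^{O(r/m)}=O(1)$. With $r$ non-doubling steps of this kind and $m$ doublings, the length is exactly $m+r$ and $n\in[2^m,2^{m+2})$; dropping one doubling fixes the range at the cost of a constant factor. Distinctness of chains is immediate, since the chains differ as sequences of index pairs (or as integer sequences, after checking injectivity via the positions where $a_j\ne 2a_{j-1}$). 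This yields $G(m,r)\ge K_3^r(m^2/r)^r$. The hypothesis $m^{0.9}<r$ is used only to absorb lower-order terms such as $\binom{m+r}{r}$ versus $(m/r)^r$ and polynomial factors into $K^r$.
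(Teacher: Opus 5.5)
Your upper bound has a genuine gap. It is structural, not a bookkeeping slip. The last inequality, $\sum_{d\le Cr}\binom{m+r}{d}(2m)^dK^d\le K_4^r(m^2/r)^r$, is false. The sum is dominated by its top term $d=Cr$, of size $(c\,m^2/r)^{Cr}$, and since your $C$ exceeds $1$, the surplus $(m^2/r)^{(C-1)r}$ cannot be absorbed into $K_4^r$.

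You use $r>m^{0.9}$ only to absorb polynomial factors, so your argument would give the bound for every $r\gg\log m$. But the bound fails for small $r$. Take a chain of length $m+r$ made of doublings except for $h$ steps $a_j=a_{j-1}+a_{j-2}$, each placed right after a doubling (so $a_j=\tfrac32a_{j-1}$), with $h$ an integer satisfying $r-1<h\log_2\tfrac43\le r$. Each set of $h$ pairwise non-adjacent positions in $\{2,\ldots,m+r\}$ gives a different chain ending at $2^{m+r-2h}3^h\in[2^m,2^{m+1})$. Hence $G(m,r)\ge\binom{m+r-h}{h}$, roughly $(m/r)^{2.41r}$, which exceeds $(K_4m^2/r)^r$ for large $r\le m^{1/4}$.

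What is missing is the paper's split of the non-doubling steps by ratio. From $\log_2 n\ge m$ you get the budget $\sum_{j\notin\mathcal A}\bigl(1-\log_2(a_j/a_{j-1})\bigr)\le r$. In it, only the small steps $a_j<(1+\delta)a_{j-1}$ (with $\delta=1/\log m$) pay almost a full unit. So only their number is at most about $r$ (Lemma~\ref{ub10}), and for them the pair count described below gives $(O(m^2r/D^2))^D\le(O(m^2/r))^r$.

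Medium steps, with ratio in $[1+\delta,\gamma)$, can number up to about $3.27r$, but both summands are confined: $a_s\ge a_j/2$ and $a_t\ge\frac{\delta}{1+\delta}a_j$. Writing $C$ now for their number, the paper counts them by a window-overlap argument (Lemma~\ref{ub8}). It then balances the gain, roughly $(m^3/C^3)^C$ up to logarithms, against the budget they consume, which costs a factor $(m^2/r)^{(1-\log_2\gamma)C}$ in the small-step count. The worst case has $C\le m^{0.8981+o(1)}$ and leaves an extra factor $\exp(m^{0.8981+o(1)})$. Absorbing that factor is exactly where $r>m^{0.9}$ is used. Large steps (ratio at least $\gamma$) must follow a medium or small step (Lemma~\ref{ub3}), which keeps their contribution to $e^{O(r)}$.

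Two intermediate claims are also false. First, a non-doubling step need not satisfy $a_j\le\tfrac32a_{j-1}$: in $1,2,4,8,16,17,33$ the last ratio exceeds $1.9$. The conclusion $d=O(r)$ survives only through Sch\"onhage's observation that a step of ratio at least $\gamma$ must be preceded by a non-doubling step of ratio below $\gamma$. This gives $d\le r/(1-\log_2\gamma)$ (Lemma~\ref{ub2}).

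Second, $\sum_j g_j=O(r)$ fails. In $1,2,\ldots,2^K,2^K+1,2^K+4,\ldots,2^K+4^{\ell-1}$, each new term decomposes only as $2^K+4^i$, so the gaps are $0,1,\ldots,\ell-1$. The constraint $g_i\le i-1$ inside a run of length $\ell$ allows $\ell!$ gap sequences, not $2^{O(\ell)}$. The device that works (Lemma~\ref{ub9}) is this: $a_s\ge a_j/2$ forbids any doubling step in $(s,j-1]$, so the pair (smaller index, $s+1$) lies in a set of size $O(mr)$ fixed by the positions of the non-doubling steps. Distinct steps use distinct pairs, and the order of use is recovered from the sums. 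So you count $\binom{O(mr)}{D}\le(O(mr)/D)^D$ instead of a product over steps, and the $1/D!$ saved there is essential.

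Your lower bound is essentially the paper's construction, and the counting of positions and summands and the injectivity are fine. The range control is not. With $a_t\le a_{j-1}/m$ the growth factor can approach $(1+1/m)^r$, which exceeds $2$ for $r$ near $m$. Deleting a doubling shortens the chain to length $m+r-1$ and can push $n$ below $2^m$. Instead, keep exactly $m$ doublings and make the added terms tiny. For instance, start with $\lfloor m/2\rfloor$ doublings and add only terms $2^i$ with $i<\lfloor m/4\rfloor$, so the total factor is at most $(1+2^{-m/4})^r<2$.
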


\begin{theorem}\label{t2}
For every fixed $0<c<\log 2$, there exist computable constants $K_1>0$  and $K_2>0$ such that
$$
 K_1^r m^r\le F(m, r)\le K_2^r m^r
$$
for all sufficiently large $m$ and all integers
$m^{0.9}<r\le cm/\log m$.
The upper bound also holds for every integer $r>m^{0.9}$.
\end{theorem}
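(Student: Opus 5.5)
We prove Theorem~\ref{t2} from Theorem~\ref{t1}: the upper bound by compressing $G(m,r)$ along the doubling steps, the lower bound by an explicit construction.

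\textbf{Upper bound.} We do not bound $F(m,r)\le G(m,r)$ directly, since $(m^2/r)^r$ is far too large. Instead we encode each $n$ with $\ell(n)\le m+r$ by a shorter object. Fix a minimal chain for $n$ and classify its steps.
\begin{itemize}
\item A step $a_{j+1}=2a_j$ is a \emph{doubling}.
\item Every other step is a \emph{non-doubling} step, and the standard argument (Sch\"onhage/Erd\H{o}s style) shows there are at most $O(r)$ of them.
\end{itemize}
Indeed, each step at most doubles the maximum. Writing $\lambda(a_j)=\lfloor\log_2 a_j\rfloor$, a step that is not a doubling of the current maximum either fails to increase $\lambda$ or is a ``small step''. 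There are at most $r$ small steps, and the non-small non-doublings are controlled by $r$ as well.

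Next we use Theorem~\ref{t1}'s counting mechanism rather than its statement. The number of chains is essentially the number of ways to place $O(r)$ special steps among $m+r$ positions, namely $\binom{m+r}{r}\le (Ce m/r)^r$, times the number of ways of choosing their summands. In $G$ each summand choice ranges over $\sim m$ earlier elements, which gives the extra factor $m^r$.

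For $F$ we only need the integer $n$. So we replace the chain by a ``reduced description''. Between consecutive non-doubling steps the chain is a pure power-of-two multiple, hence each element equals $2^{e}b$ where $b$ lies in a set $B$ of size $O(r)$. Then $n$ is determined by
\begin{itemize}
\item a recursive description of $B$, each element being a sum of two elements $2^{e}b'+2^{e'}b''$;
\item the exponent data.
\end{itemize}
The key count is that the exponent offsets between consecutive special steps sum to at most $m+r$. Counting compositions gives $\binom{m+O(r)}{O(r)}$, and choosing the pair $b',b''$ among $O(r)$ elements gives $O(r)^{2}$ per step. Altogether this is roughly $(Cm/r)^{O(r)}\,(Cr)^{O(r)}=(C'm)^{O(r)}$.

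To get exactly exponent $r$ rather than $O(r)$ we must be sharper. Each genuine addition $2^{e}b'+2^{e'}b''$ costs a factor $m$ (one free relative shift) times $O(1)$ amortized, once we note that the pair can be chosen canonically as the most recent element plus one earlier one, with the position absorbed into the shift. This yields $F(m,r)\le K_2^r m^r$ for every $r>m^{0.9}$. I expect this exact-exponent bookkeeping to be the main obstacle. I would try to mimic the proof of the upper bound in Theorem~\ref{t1}, in which $m^2/r$ arises as $(m/r)\cdot m$, and show that the factor $m/r$ from step positions disappears when distinct chains producing the same $n$ are identified.

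\textbf{Lower bound.} Here we construct many $n$ explicitly. Take $n=2^m+\sum_{i=1}^{s}2^{e_i}$ with $s=\lfloor r/2\rfloor$ or so, with exponents $e_i$ chosen in $[0,m)$ with restricted gaps. Building $n$ by binary doubling with additions of $1$ costs $m+s$ steps. This gives only $\binom{m}{s}\approx(em/s)^s$ integers, which is too few when $r\gg m/\log m\cdot$const. Instead we use windows, following Brauer. First precompute a set $W$ of $t$ odd numbers of size about $2^{w}$ with $t=r/2$ extra steps. Then run the doubling scheme, adding at $r/2$ chosen positions one element from $W$. This gives
\[
\binom{m}{r/2}\,t^{r/2}\approx \Bigl(\tfrac{2em}{r}\Bigr)^{r/2}\Bigl(\tfrac r2\Bigr)^{r/2}\ge (K m)^{r/2}.
\]
To reach $m^r$ we would need to tune the parameters, spending $\epsilon r$ steps on $W$ with $|W|$ up to $\epsilon r$ and $(1-\epsilon)r$ insertion steps, and choose $W$ so that the resulting integers are distinct. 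Distinctness should follow from the positions being separated by more than $\log_2\max W$ bits, so that the binary blocks do not overlap. The condition $r\le cm/\log m$ with $c<\log 2$ is what leaves room for separated blocks: the total block length $r\log r$ must stay below $m$. I would verify this constraint carefully. The count $(m/r)^{r}r^{r}=m^r$ up to $K^r$ then gives $F(m,r)\ge K_1^r m^r$.
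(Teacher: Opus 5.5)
Both halves of your proposal have a genuine gap.

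\textbf{Upper bound.} Your diagnosis is right: the positional factor $(m/r)^r$ hidden in $G$ must disappear. But your encoding only gives $(C'm)^{O(r)}$, and the mechanism for the sharpening is missing.
\begin{itemize}
\item The canonical form ``most recent term plus one earlier term'' is not available. Minimal chains need not be Brauer chains, while $F(m,r)$ counts every $n$ with $\ell(n)\le m+r$.
\item Even for Brauer chains, placing about $r$ nondoubling steps among $m+r$ positions and picking the other summand among about $m$ earlier terms just reproduces $(m^2/r)^r$.
\item Positions cannot be absorbed into shifts, because the odd parts $b$ used later depend on where the earlier nondoubling steps sit.
\end{itemize}
The paper's idea is different. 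It counts only \emph{valid} chains, those pointwise minimal among chains of the same length ending at $n$. It groups the nondoubling steps into $k$ maximal addition blocks, whose placement among the doublings costs only $e^{O(r)}(m/k)^k$. A left-shift argument shows that a block not connected to $a_0=1$ contradicts validity. Hence at least $\max(k-C,0)$ small steps have their smaller summand at, or just before, a nondoubling step. Such a step has only $O(r^2)$ index pairs instead of $O(rm)$. That saving of roughly $(m/r)^{k}$ offsets $(m/k)^k$ up to $(r/k)^{2k}\le e^{2r/e}$. The range $r\ge m\log 2/\log m$ is trivial from $F(m,r)\le 2^m\le m^r$.

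\textbf{Lower bound.} A fixed precomputed window set $W$ cannot produce $m^r$ integers. Each element of $W$ costs essentially one nondoubling step, so with $s$ insertions you have $s+|W|\le r+O(\log m)$. Your count $\binom{m}{s}|W|^s\le(em|W|/s)^s$ is then at most
$e^{o(r)}m^r(e|W|/s)^s\,m^{-|W|}\le e^{o(r)}m^r(\log m)^{-s}$,
using $(ey/s)^s\le e^y$ with $y=|W|\log m$. Reaching $K^rm^r$ at all would force $s=r(1-o(1))$. So you lose a factor $(\log m)^{r(1-o(1))}$, which is not of the form $K^r$. Your own parameter choices show this: they give $(Km)^{r/2}$, or $m^{(1-\epsilon)r}$ for fixed $\epsilon$. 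The closing identity $(m/r)^r r^r=m^r$ tacitly assumes about $r$ insertions and $|W|\asymp r$ at the same time, which the step budget forbids.

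The missing idea is to recycle the outputs of earlier blocks as the reservoir of summands. These outputs are already chain terms, so they cost nothing. The reservoir grows geometrically from $N_0$ seeds, costing $o(r)$ steps, to size $N_J\asymp r$. Each block $X_i=2^{g_i}X_{i-1}+2^{e_i}H_{q_i}+H_{p_i}$ then offers about $(N_JL)^2\asymp m^2$ choices for its two additive steps. The price is that these summands are huge, so distinctness can no longer be read off from non-overlapping binary windows. That is why the paper needs:
\begin{itemize}
\item random seed residues modulo $2^K$;
\item the root--arrow rule with the marker-separation lemma;
\item separators to recover phase endpoints;
\item backward decoding.
\end{itemize}
Together these show that each endpoint arises from at most $e^{o(r)}$ good records.
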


Note that the lower bound in Theorem \ref{t2} cannot hold for every $m^{0.9}<r<m$. Since $F(m,r)\leq 2^m$, taking $r=\lfloor m/2\rfloor$ would contradict any bound $F(m,r)\geq K_1^rm^r$ with fixed $K_1>0$.

Choosing $r=\lfloor cm/\log m\rfloor $ in Theorem \ref{t2}, we obtain
$$
\log F\left(m, r \right) =cm + O_c\left(\frac m{\log m}\right),
$$
thereby improving results established in our previous work \cite{DKN}.

Throughout the paper, $\log$ denotes the natural logarithm, whereas
$\log_2$ denotes the logarithm to base $2$.
Unless otherwise indicated, all constants, as well as all constants implicit
in $O(\cdot)$ and $\asymp$, are absolute.  A subscript as in
$O_c(\cdot)$ or $\asymp_c$ indicates that the corresponding constants may
depend on $c$.




\section{Proving the upper bound in Theorem \ref{t1}}

The proof of the upper bound in Theorem \ref{t1} relies on a series of preliminary definitions and lemmas. We define $\gamma=\frac{1+\sqrt{5}}{2}$
as the golden ratio.

\begin{definition} Consider the addition chain
$$
1=a_0<a_1<\cdots <a_{m+r}=n
$$
where $r$ is a positive integer and assume that $2^m\le n< 2^{m+1}$.  We divide the steps of such an addition chain into the following four sets:
\begin{itemize}
\item ${\cal A}:=\{j,\, 1\le j\le m+r:a_j=2a_{j-1}\}$, the doubling steps,
\item ${\cal B}:=\{j,\, 1\le j\le m+r:\gamma a_{j-1}\leq a_j<2a_{j-1}\}$, the large steps,
\item ${\cal C}:=\{j,\, 1\le j\le m+r:(1+\delta)a_{j-1}\leq a_j<\gamma a_{j-1}\}$, the medium-sized steps,
\item ${\cal D}:=\{j,\, 1\le j\le m+r:a_j<(1+\delta)a_{j-1}\}$, the small steps.
\end{itemize}
Here $\delta=\delta(m)$ is a small real number tending to zero as $m$ tends to infinity that we will specify later. We denote by $A$, $B$, $C$ and $D$ the respective cardinalities of these sets so that $A+B+C+D=m+r$.
\end{definition}
We consider an addition chain as the result of a series  of choices. We begin by fixing the cardinalities $A$, $B$, $C$ and $D$, and from there we   choose the sets ${\cal A}$, ${\cal B}$, ${\cal C}$ and ${\cal D}$. Finally, we choose the integers added in each step belonging to ${\cal B}\cup {\cal C}\cup {\cal D}$. Our strategy is to obtain an upper bound for the number of ways of making each of these choices and to deduce from these an upper bound for the function $G(m,r)$. In the next section, we will extend these ideas to prove an upper bound for $F(m,r)$.

We begin with an elementary observation.
\begin{lemma}\label{ub1}
The number of ways of choosing the cardinalities $A$, $B$, $C$ and $D$ is smaller than or equal to
$$
(m+r)^3.
$$
\begin{proof}
The value of $D$ is completely determined by the choices of $A$, $B$ and $C$ as $D=m+r-A-B-C$.  The number of ways of choosing $A$, $B$ or $C$ is at most $m+r$, so in total it is bounded above by $(m+r)^3$.
\end{proof}
\end{lemma}

The following result provides an upper bound for the number of nondoubling steps.
\begin{lemma}\label{ub2}
We have
$$
B+C+D\le \frac{r}{1-\log_2 \gamma}.
$$
\end{lemma}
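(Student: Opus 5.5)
We argue through the multiplicative growth of the chain. Since $a_0=1$ and $a_{m+r}=n\ge 2^m$, taking base-2 logarithms of the product of ratios gives
$$
m\le \log_2 n=\sum_{j=1}^{m+r}\log_2\frac{a_j}{a_{j-1}}.
$$
We then bound each ratio according to the class of step $j$. For $j\in{\cal A}$ the ratio is exactly $2$, so each doubling step contributes $1$.

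The main step is to show that every nondoubling step has ratio less than $\gamma$ as long as it is \emph{not} in ${\cal B}$. For steps in ${\cal C}\cup{\cal D}$ this holds by definition. For steps $j\in{\cal B}$, however, the ratio can be as large as just under $2$, so the bound $\log_2(a_j/a_{j-1})\le \log_2\gamma$ is false for an individual step. The remedy is to pair each large step with the step that follows it. The key fact is this: if $a_j=a_s+a_t$ with $a_j\neq 2a_{j-1}$, then $a_j\le a_{j-1}+a_{j-2}$. Indeed, either both summands are at most $a_{j-2}$, giving $a_j\le 2a_{j-2}\le a_{j-1}+a_{j-2}$, or one summand is $a_{j-1}$ and the other is at most $a_{j-2}$.

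I would therefore use a Fibonacci-type amortization. Define the potential
$$
\Phi_j=\log_2 a_j+(1-\log_2\gamma)\cdot[\text{the chain has just made a nondoubling step}].
$$
Alternatively, and more simply, work with the weighted quantity $\log_2(a_j+\gamma^{-1}\cdots)$. The cleanest version I would try first is to consider $b_j:=a_j+a_{j-1}/\gamma$, or equivalently the vector $(a_j,a_{j-1})$. One then shows two things: a doubling step multiplies a suitable norm by at most $2$, and a nondoubling step multiplies it by at most $\gamma$. The second follows because $a_j\le a_{j-1}+a_{j-2}$ is exactly the Fibonacci recurrence, whose growth rate is $\gamma$. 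Concretely, set $N_j=a_j+\gamma^{-1}a_{j-1}$, after checking the constants. For a nondoubling step,
$$
N_j\le a_{j-1}+a_{j-2}+\gamma^{-1}a_{j-1}=\gamma a_{j-1}+a_{j-2}=\gamma N_{j-1}.
$$
For a doubling step, $N_j=2a_{j-1}+\gamma^{-1}a_{j-1}\le 2N_{j-1}$, using $a_{j-2}\ge a_{j-1}/2$. That last inequality does not always hold, so instead I use $N_j\le 2a_{j-1}+2\gamma^{-1}a_{j-2}=2N_{j-1}$, which is valid because $a_{j-1}\le 2a_{j-2}$. The boundary terms are harmless: $N_{m+r}\ge n\ge 2^m$, and $N_0$ is a constant with the convention $a_{-1}=0$ or $1/2$. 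This gives
$$
2^m\le N_{m+r}\le N_0\, 2^{A}\gamma^{B+C+D}.
$$

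Taking logarithms yields $m\le A+(B+C+D)\log_2\gamma+O(1)$. Substituting $A=m+r-(B+C+D)$ gives
$$
(B+C+D)(1-\log_2\gamma)\le r+O(1).
$$
The main obstacle is removing this $O(1)$ boundary term. Choosing $a_{-1}=0$ makes $N_0=a_0=1$, so $N_0=1$ and no additive constant appears. One must then recheck the step $j=1$, which is a doubling step: $N_1=2+\gamma^{-1}\le 2N_0$ fails. So the first step has to be handled by starting the recursion at $j=1$ with $N_1=2+\gamma^{-1}<2^{1}\cdot\gamma^{0}\cdot 2^{0.5}$, or by absorbing the first step into $A$ through a direct check. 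If a tiny constant remains, I would sharpen the normalisation, for instance by using $N_j=a_j+\gamma^{-2}a_{j-1}$ or by comparing directly with Fibonacci numbers, to obtain the stated inequality exactly.
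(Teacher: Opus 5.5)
Your per-step estimates are correct, and they give a self-contained argument for what the paper only cites (Lemma~4.1 of the authors' earlier paper). With $N_j=a_j+\gamma^{-1}a_{j-1}$:
\begin{itemize}
\item the inequality $a_j\le a_{j-1}+a_{j-2}$ and the identity $1+\gamma^{-1}=\gamma$ give $N_j\le\gamma N_{j-1}$ at every nondoubling step;
\item the bound $a_{j-1}\le 2a_{j-2}$ gives $N_j\le 2N_{j-1}$ at every doubling step $j\ge 2$;
\item ${\cal B}\cup{\cal C}\cup{\cal D}$ is exactly the set of nondoubling steps.
\end{itemize}
(Your remark that $a_{j-2}\ge a_{j-1}/2$ ``does not always hold'' is mistaken, since it is the same inequality as $a_{j-1}\le 2a_{j-2}$.)

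The gap is the boundary constant, which you leave open. From $N_1=2+\gamma^{-1}=\gamma^2$ and only $N_{m+r}\ge n\ge 2^m$, you get $2^m\le\frac{\gamma^2}{2}\,2^A\gamma^{B+C+D}$. That is, $(B+C+D)(1-\log_2\gamma)\le r+\log_2(\gamma^2/2)$. Since $\log_2(\gamma^2/2)\approx 0.39$ exceeds $1-\log_2\gamma\approx 0.31$, integrality does not recover the lemma.

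Your proposed repair with weight $\gamma^{-2}$ does not work. The nondoubling comparison then requires $a_{j-1}\ge\gamma a_{j-2}$. This fails, for example, at $j=3$ in the chain $1,2,3,5$, where $N_3\approx 6.15>\gamma N_2\approx 6.09$. The weight $\gamma^{-1}$ is exactly the one that turns the nondoubling comparison into an identity, so it should be kept.

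What is missing is that the top end carries the same constant. Since $a_{m+r-1}\ge n/2$ and $\gamma^3=2\gamma+1$,
\[
N_{m+r}=n+\gamma^{-1}a_{m+r-1}\ge\Bigl(1+\frac{1}{2\gamma}\Bigr)n=\frac{\gamma^2}{2}\,n.
\]
Step $1$ is a doubling, so $N_{m+r}\le N_1\,2^{A-1}\gamma^{B+C+D}=\frac{\gamma^2}{2}\,2^A\gamma^{B+C+D}$. Combining the two bounds, the factor $\gamma^2/2$ cancels, leaving $2^m\le n\le 2^A\gamma^{B+C+D}$. Substituting $A=m+r-(B+C+D)$ gives exactly $(B+C+D)(1-\log_2\gamma)\le r$.

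Equivalently, take $a_{-1}=1/2$. Then $a_0\le 2a_{-1}$, so the doubling estimate also holds at $j=1$, and $N_0=\gamma^2/2$ matches this lower bound for $N_{m+r}$. With this line added, your argument is a complete proof.
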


\begin{proof}
This is Lemma 4.1 in \cite{DKN}. The proof follows essentially the ideas of Sch\"onhage \cite{schonhage}.
\end{proof}
The choice of the golden ratio $\gamma$ in the definition of the sets ${\cal B}$ and ${\cal C}$ is further motivated by the following observation.

\begin{lemma}\label{ub3}
If a step $j$ belongs to the set ${\cal B}$, then
$$
j-1\in {\cal C}\cup {\cal D}.
$$
\end{lemma}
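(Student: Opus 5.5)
We prove the contrapositive: if step $j-1$ is a doubling step or a large step, then step $j$ cannot be a large step. Since ${\cal A},{\cal B},{\cal C},{\cal D}$ partition $\{1,\dots,m+r\}$, this gives $j-1\in{\cal C}\cup{\cal D}$ whenever $j\in{\cal B}$. We must first dispose of $j=1$: since $a_1=a_0+a_0=2$, step $1$ is always a doubling step, so $j\in{\cal B}$ forces $j\ge 2$ and $j-1$ is a genuine step.

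The key input is the structure of a nondoubling step. Write $a_j=a_s+a_t$ with $s\le t<j$. If $t<j-1$, then $a_j\le 2a_{j-2}$. If $t=j-1$ and $a_j\ne 2a_{j-1}$, then $s<j-1$ and $a_j=a_{j-1}+a_s\le a_{j-1}+a_{j-2}$. In either case, a nondoubling step satisfies
$$
a_j\le \max(2a_{j-2},\,a_{j-1}+a_{j-2})\le a_{j-1}+a_{j-2},
$$
where the second inequality holds because $a_{j-2}<a_{j-1}$. Hence $a_j/a_{j-1}\le 1+a_{j-2}/a_{j-1}$.

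Now let $j\in{\cal B}$ and suppose $j-1\in{\cal A}\cup{\cal B}$. Then $a_{j-1}\ge\gamma a_{j-2}$, so $a_{j-2}/a_{j-1}\le 1/\gamma$. Step $j$ is nondoubling, so the bound above gives
$$
\frac{a_j}{a_{j-1}}\le 1+\frac1\gamma=\gamma,
$$
using $\gamma^2=\gamma+1$.

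Equality would require $a_j=a_{j-1}+a_{j-2}$ and $a_{j-1}=\gamma a_{j-2}$. The second condition is impossible because $\gamma$ is irrational and the $a_i$ are integers. So $a_j<\gamma a_{j-1}$, contradicting $j\in{\cal B}$.

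The only delicate point is the boundary case $a_j=\gamma a_{j-1}$ permitted by the definition of ${\cal B}$, which is exactly what the irrationality of $\gamma$ rules out. Everything else is the elementary observation that a nondoubling step adds at most $a_{j-2}$ to $a_{j-1}$.
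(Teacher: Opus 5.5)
Your proof is correct and complete. The paper does not reprove this lemma (it cites Lemma 4.3 of \cite{DKN}), but your argument uses the same golden-ratio mechanism the paper itself uses in the proof of Lemma~\ref{ub5}: a nondoubling step satisfies $a_j\le a_{j-1}+a_{j-2}$, the identity $1+1/\gamma=\gamma$ then gives the bound, and the irrationality of $\gamma$ rules out the boundary case $a_j=\gamma a_{j-1}$. Indeed, the conclusion $[t+1,j-1]\cap({\cal A}\cup{\cal B})=\emptyset$ derived there, with $t\le j-2$, already contains this lemma.
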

\begin{proof}
This is Lemma 4.3 in \cite{DKN}.
\end{proof}

From this, we deduce an upper bound for the number of ways one can choose the sets ${\cal A}$, ${\cal B}$, ${\cal C}$ and ${\cal D}$.

\begin{lemma}\label{ub4}
Assuming that the cardinalities $A$, $B$, $C$ and $D$ have been chosen, the number of ways of choosing the sets ${\cal A}$, ${\cal B}$, ${\cal C}$ and ${\cal D}$ is smaller than or equal to
$$
\left(\frac{e(m+r)}{C+D}\right)^{C+D}4^{r/(1-\log_2\gamma)}.
$$
\end{lemma}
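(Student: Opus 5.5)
We count the sets in two stages: first the set ${\cal C}\cup{\cal D}$ inside $\{1,\dots,m+r\}$, then the way the remaining steps split into ${\cal A}$ and ${\cal B}$, and on the ${\cal C}\cup{\cal D}$ steps, into ${\cal C}$ and ${\cal D}$.

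\emph{Choosing ${\cal C}\cup{\cal D}$.} Since $C+D$ is fixed, the number of choices for this set is
$$\binom{m+r}{C+D}\le \left(\frac{e(m+r)}{C+D}\right)^{C+D},$$
by the standard estimate $\binom{N}{k}\le (eN/k)^k$. This gives the first factor in the statement.

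\emph{Choosing ${\cal B}$.} By Lemma~\ref{ub3}, every $j\in{\cal B}$ satisfies $j-1\in{\cal C}\cup{\cal D}$. Once ${\cal C}\cup{\cal D}$ is fixed, ${\cal B}$ is therefore a subset of $\{j: j-1\in {\cal C}\cup{\cal D}\}$, and this set has at most $C+D$ elements. So there are at most $2^{C+D}$ choices for ${\cal B}$. The set ${\cal A}$ is then the complement of ${\cal B}\cup{\cal C}\cup{\cal D}$, so it is determined.

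\emph{Splitting ${\cal C}\cup{\cal D}$ into ${\cal C}$ and ${\cal D}$.} There are at most $2^{C+D}$ ways to do this.

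Multiplying the three counts bounds the number of configurations by
$$\left(\frac{e(m+r)}{C+D}\right)^{C+D}4^{C+D}.$$
Lemma~\ref{ub2} gives $C+D\le B+C+D\le r/(1-\log_2\gamma)$, so $4^{C+D}\le 4^{r/(1-\log_2\gamma)}$, which is the claimed bound. If $C+D=0$, Lemma~\ref{ub3} forces ${\cal B}=\emptyset$, so the configuration is unique; reading the first factor as $1$ in that case, the bound still holds.

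The key step is using Lemma~\ref{ub3} to pin ${\cal B}$ to the successors of ${\cal C}\cup{\cal D}$. Choosing ${\cal B}$ freely among all $m+r$ positions would cost a factor of roughly $(m/B)^B$ instead of $2^{C+D}$. With Lemma~\ref{ub3} in hand, the rest is routine counting.
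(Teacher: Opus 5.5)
Your proposal is correct and follows essentially the same argument as the paper. You choose ${\cal C}\cup{\cal D}$ in $\binom{m+r}{C+D}$ ways, use Lemma~\ref{ub3} to confine ${\cal B}$ to the successors of ${\cal C}\cup{\cal D}$, bound the ${\cal C}/{\cal D}$ split and the choice of ${\cal B}$ by $2^{C+D}$ each (the paper writes $\binom{C+D}{C}\binom{C+D}{B}\le 4^{C+D}$), and finish with Lemma~\ref{ub2}; your handling of the degenerate case $C+D=0$ is a harmless extra.
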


\begin{proof}
Observe that the set ${\cal A}$ is completely specified once the sets ${\cal B}$, ${\cal C}$ and ${\cal D}$ are chosen.  From Lemma \ref{ub3}, the number of ways of choosing the set ${\cal B}$ once the sets ${\cal C}$ and ${\cal D}$ have been chosen does not exceed
$$
{C+D\choose B}.
$$
It follows that the number of ways of choosing the sets ${\cal A}$, ${\cal B}$, ${\cal C}$  and ${\cal D}$ is smaller than or equal to
$$
{m+r \choose C+D}{C+D\choose C}{C+D\choose B}\le {m+r \choose C+D} 4^{C+D}.
$$
Using Lemma \ref{ub2}, we deduce that this number of possible choices is smaller than or equal to
$$
{m+r \choose C+D}4^{r/(1-\log_2\gamma)}.
$$
Since we also have that
$$
{m+r\choose C+D}\le \frac{(m+r)^{C+D}}{(C+D)!}\le \left(\frac{e(m+r)}{C+D}\right)^{C+D},
$$
the proof is complete.
\end{proof}
We now turn our attention to bounding the number of ways of choosing the integers added in each step.  If a step belongs to the set ${\cal A}$ (that is, if a step is a doubling step), then there is no choice to be made.

We first obtain an upper bound for the number of ways of choosing integers added in steps belonging to the set ${\cal B}$.
\begin{lemma}\label{ub5}
The number of ways of choosing the integers added in steps belonging to ${\cal B}$ is smaller than or equal to
$$
e^{2r/(e(1-\log_2 \gamma))}.
$$
\end{lemma}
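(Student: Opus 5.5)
We need to bound the number of choices of summands in ${\cal B}$-steps by $e^{2r/(e(1-\log_2\gamma))}$. Write $\beta:=1/(1-\log_2\gamma)$. By Lemma \ref{ub2} we have $B\le \beta r$. The target is $e^{2\beta r/e}$, which equals $\max_x (e^{1/e})^{2\beta r}$. This suggests the plan: for a step $j\in{\cal B}$, the two summands $a_s+a_t=a_j\ge\gamma a_{j-1}$ must both be relatively large. Since each summand is at most $a_{j-1}$, we need $a_s,a_t\ge (\gamma-1)a_{j-1}$. So only indices among the last few steps qualify, and the count will be controlled by how many steps back we may go.

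First I would show that the number of choices at step $j\in{\cal B}$ is at most about $k_j^2$, where $k_j$ is the number of earlier indices $i<j$ with $a_i\ge (\gamma-1)a_{j-1}=a_{j-1}/\gamma$. Here I expect the refined count $\binom{k_j+1}{2}$, or even $k_j$ when one uses that the pair is unordered and determined by one summand once $a_j$ is fixed. Indeed, the larger summand determines the smaller, so the number of choices is at most $k_j$. Thus the total count is $\prod_{j\in{\cal B}} k_j$.

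Next I would relate $k_j$ to the chain structure. Going back $k_j$ steps, the chain values drop from $a_{j-1}$ to something $\ge a_{j-1}/\gamma$, so $\prod (a_i/a_{i-1})$ over those $k_j-1$ steps is at most $\gamma$. Hence those intermediate steps are mostly in ${\cal C}\cup{\cal D}$ (at most one can be large). The windows preceding distinct ${\cal B}$-steps have to be charged to non-doubling steps. Since the windows may overlap, I would use a weighting argument: the sum of the $k_j$ is bounded by roughly twice the number of non-doubling steps, giving $\sum_j k_j\le 2\beta r$. This is consistent with Lemma \ref{ub3}, which guarantees $j-1\in{\cal C}\cup{\cal D}$.

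Finally, by the inequality $k\le e^{k/e}$ (equivalently $x^{1/x}\le e^{1/e}$), we get
$$\prod_{j\in{\cal B}} k_j\le \exp\Big(\frac1e\sum_{j\in{\cal B}}k_j\Big)\le e^{2\beta r/e},$$
which is the claimed bound.

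The main obstacle is the middle step, bounding $\sum_{j\in{\cal B}} k_j\le 2(B+C+D)$. The difficulty is overlapping windows: a single small step might lie in the windows of several ${\cal B}$-steps. I would first try to prove that windows of consecutive ${\cal B}$-steps are essentially disjoint. The idea is that a ${\cal B}$-step multiplies by at least $\gamma$, so any earlier index lies below $a_{j'-1}/\gamma$ for the next ${\cal B}$-step $j'$. That makes each non-doubling step counted at most twice.
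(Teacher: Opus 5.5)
There is a genuine gap in your per-step count. Your intermediate count $\binom{k_j+1}{2}$ is the right one, but the reduction to at most $k_j$ choices, because ``the larger summand determines the smaller once $a_j$ is fixed,'' is circular. In this count $a_j$ is not given: it is the \emph{output} of the choice. The lemma feeds into the bound for $G(m,r)$, which counts distinct chains, so every unordered pair $\{s,t\}$ from your window is a separate choice and can produce a different $a_j$. For example, if the window values are $x,x+1,x+3,x+7$ with $x$ large, all pairwise sums are distinct and, apart from the doubling, all give ${\cal B}$-steps. So the correct per-step bound is $\binom{k_j+1}{2}\le k_j^2$. Once this is corrected, your planned budget $\sum_{j\in{\cal B}}k_j\le 2(B+C+D)\le 2r/(1-\log_2\gamma)$, combined with $k^2\le e^{2k/e}$, only yields $e^{4r/(e(1-\log_2\gamma))}$. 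That misses the stated constant by a factor $2$ in the exponent. Your two errors happen to cancel numerically, but neither step is valid as written.

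What is missing is the sharp budget $\sum_{j\in{\cal B}}k_j\le B+C+D$. Your worry about overlapping windows disappears once you see that no doubling or ${\cal B}$-step can lie strictly inside a window. For $j-k_j<u\le j-1$, the product of the ratios $a_u/a_{u-1}$ equals $a_{j-1}/a_{j-k_j}\le\gamma$. This rules out a doubling, whose ratio is $2>\gamma$. It also rules out a ${\cal B}$-step, whose ratio is at least $\gamma$; equality would make the rational number $a_{j-1}/a_{j-k_j}$ equal to the irrational $\gamma$. Hence $k_j\le\eta(j)+1$, where $\eta(j)$ is the length of the maximal run of consecutive steps of ${\cal C}\cup{\cal D}$ ending at $j-1$. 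These runs are pairwise disjoint for distinct ${\cal B}$-steps, since a ${\cal B}$-step ends any such run. Therefore $\sum_{j\in{\cal B}}k_j\le B+C+D\le r/(1-\log_2\gamma)$ by Lemma~\ref{ub2}, and $\prod_{j}k_j^2\le\exp\bigl(\tfrac{2}{e}\sum_j k_j\bigr)$ gives exactly the stated bound. This is the paper's argument. It places both summands in $[j-1-\eta(j),j-1]$, bounds the number of choices by $\prod_{j\in{\cal B}}(\eta(j)+1)^2$, and then applies AM--GM and maximizes over $B$. That last step is equivalent to your termwise inequality $x\le e^{x/e}$, which is a slightly cleaner way to finish.
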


\begin{proof}
Suppose that $j\in {\cal B}$ and that $a_j=a_s+a_t$ with $j-1\ge s\ge t$. First note that $a_t\ge a_s/\gamma$ since otherwise
$$
a_j=a_s+a_t<(1+1/\gamma)a_s=\gamma a_s\le \gamma a_{j-1}.
$$
If $t<s$, this implies that
$$
[t+1,s]\cap({\cal A}\cup {\cal B})=\emptyset.
$$
Now, assume that $s<j-1$ and that there exists an integer $u\in (s, j-1]$ such that $u\in {\cal A}\cup{\cal B}$. Then we would have
$$
a_j> \gamma a_{j-1}\ge \gamma a_u>\gamma^2 a_{u-1}\ge \gamma^2 a_s,
$$
contradicting the inequality $a_s\ge a_j/2$.
From this, we deduce that
$$
[t+1,j-1]\cap({\cal A}\cup {\cal B})=\emptyset.
$$
We thus have that the number of ways of choosing integers added in steps belonging to ${\cal B}$ is smaller than or equal to
$$
\prod_{j\in{\cal B}}(\eta(j)+1)^2,
$$
where $\eta(j)$ is the maximal value of $\eta$ for which
$$
x\in {\cal C}\cup {\cal D},\, \mbox{ for all } x\in[j-\eta,j-1].
$$
In other words, if $j\in{\cal B}$, $\eta(j)$ is the length of the sequence of consecutive integers belonging to ${\cal C}\cup {\cal D}$ and preceding $j$. Since each element of ${\cal C}\cup {\cal D}$ contributes to the value of $\eta(j)$ for at most one $j$, we have
$$
\sum_{j\in{\cal B}}(\eta(j)+1)\le B+C+D.
$$
From this, using the arithmetic-geometric mean inequality, we can conclude that the number of ways of specifying the pairs of indices used in the steps belonging to ${\cal B}$ is smaller than or equal to
\begin{equation*}
\left(\frac{B+C+D}{B}\right)^{2B}\leq \left(\frac{r}{(1-\log_2\gamma)B}\right)^{2B},
\end{equation*}
where we used Lemma \ref{ub2}.

To obtain an upper bound for this last quantity, we consider it as a continuous function of $B$, relaxing the condition that $B$ has to be an integer. Its derivative is equal to
\begin{align*}
&\frac{d}{dB}
\exp\big(2B\log r-2B\log(1-\log_2\gamma)-2B\log B\big)\\
&\quad=
\big(2\log r-2\log(1-\log_2\gamma)-2\log B-2\big)\\
&\qquad\qquad\times
\exp\big(2B\log r-2B\log(1-\log_2\gamma)-2B\log B\big).
\end{align*}
The derivative is positive before
$
B=r/(e(1-\log_2\gamma))
$
and negative afterwards. The claim follows.
\end{proof}

We will now establish an upper bound for the number of ways of choosing the integers added in steps belonging to ${\cal C}$.

\begin{lemma}\label{ub8}
The number of ways of  choosing the integers added in steps belonging to ${\cal C}$ is smaller than or equal to
$$
\left(\frac{-8e^2\log 2(m+r)^2\log \delta}{(\delta C)^2}\right)^{C}.
$$
\end{lemma}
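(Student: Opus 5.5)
For a step $j\in{\cal C}$ with $a_j=a_s+a_t$, $s\ge t$, I will bound separately the number of choices for $s$ and for $t$, and then multiply over $j\in{\cal C}$ using the arithmetic--geometric mean inequality, as in Lemma \ref{ub5}.

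\textbf{The index $t$.} Since $a_j\ge(1+\delta)a_{j-1}$ and $a_s\le a_{j-1}$, we get $a_t\ge a_j-a_{j-1}\ge \delta a_{j-1}$. Along the chain every step at most doubles, while the chain climbs from $a_t$ to $a_{j-1}\le a_t/\delta$. Steps in ${\cal A}\cup{\cal B}$ multiply by at least $\gamma$, so at most $\log(1/\delta)/\log\gamma$ of them lie in $(t,j-1]$. Steps in ${\cal C}$ multiply by at least $1+\delta$, so at most about $\log(1/\delta)/\delta$ of them lie there. Steps in ${\cal D}$ are not controlled this way. So instead of counting indices, I will count by position in a linear ordering: let $p(j)$ be the number of steps preceding $j$, counted among all $m+r$ steps.

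The crude count is simply: $t\le j-1$ and $s\le j-1$, giving at most $(m+r)^2$ choices per step. That bound has no $C$ or $\delta$ in it, so the target's shape $((m+r)^2\log(1/\delta)/(\delta C)^2)^C$ should come from averaging. The idea is to bound each step by a product $x_j y_j$ and sum the factors.

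\textbf{The sum to control.} For $j\in{\cal C}$ let $w_j=j-t$ be the backward gap to the smaller summand. I will show
\[
\sum_{j\in{\cal C}}(\text{number of choices for }t)\lesssim (m+r)\,\frac{\log(1/\delta)}{\delta}\cdot\frac{1}{C}\cdot\Big(\text{a constant involving }\log 2\Big),
\]
and the same kind of estimate for $s$. The argument is a double count: a given position $u$ can serve as a valid $t$ only for those $j\in{\cal C}$ with $a_{j-1}\le a_u/\delta$. The values $a_j$ at the ${\cal C}$-steps increase geometrically, by a factor at least $1+\delta$ each. Hence the number of ${\cal C}$-steps $j$ with $a_u\le a_{j-1}\le a_u/\delta$ is at most $\log(1/\delta)/\log(1+\delta)\le 2\log(1/\delta)/\delta$.

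\textbf{Combining.} Summing over the $m+r$ positions $u$, the total number of pairs $(j,t)$ is at most $2(m+r)\log(1/\delta)/\delta$. For $s$ I will use the bound $a_s\ge a_j/2$ in the same way. The ${\cal C}$-values between $a_s$ and $2a_s$ number at most $\log 2/\log(1+\delta)\le 2\log2/\delta$, so the total number of pairs $(j,s)$ is at most $2(m+r)\log 2/\delta$. By AM--GM,
\[
\prod_{j\in{\cal C}} x_jy_j\le\Big(\frac{\sum x_j}{C}\Big)^C\Big(\frac{\sum y_j}{C}\Big)^C\le\Big(\frac{4\log 2\,(m+r)^2\log(1/\delta)}{\delta^2C^2}\Big)^C,
\]
where $x_j$ counts the choices for $t$ and $y_j$ those for $s$. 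This matches the claim up to the factor $2e^2$, and that factor leaves slack for the main obstacle below.

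\textbf{Main obstacle.} The double count is cleanest if the values at the ${\cal C}$-steps are distinct and geometrically spaced. The spacing needs care because $a_{j-1}$ for consecutive $j\in{\cal C}$ are separated by at least a factor $1+\delta$ only after passing through a ${\cal C}$-step. There is also a boundary issue with $j-1$ versus $j$, which I will absorb into the constant $e^2$. I expect this step to need the most work.
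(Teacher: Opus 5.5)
Your argument is correct in substance, and its core is the same as the paper's. The paper also bounds $\sum_{j\in{\cal C}}\xi_1(j)$ and $\sum_{j\in{\cal C}}\xi_2(j)$ by showing that each index lies in few windows, since the ${\cal C}$-steps inside one window multiply the chain by at least $1+\delta$ each.

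The difference is how the sum bound becomes a count. The paper never multiplies per-step counts. It observes that the offsets $j-s$ (resp.\ $j-t$), $j\in{\cal C}$, are $C$ positive integers with total at most $S$. Hence the offset vectors of \emph{all} chains lie in one fixed set of size $\binom{\lfloor S\rfloor}{C}\le(eS/C)^C$. This is where the $e^2$ in the statement comes from. It also makes it irrelevant that the windows depend on the values $a_i$, and hence on earlier choices.

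Your AM--GM route gives a sharper constant, but the step ``number of ways $\le\prod_j x_jy_j$'' is not automatic here. In Lemma~\ref{ub5} it was, because $\eta(j)$ depends only on the sets ${\cal A},\dots,{\cal D}$. You need the following tree fact. If each node has at most $b(\text{node})$ children, with $b$ determined by the prefix, then the number of leaves is at most the maximum over root-to-leaf paths of the product of the $b$'s. This follows by induction from the leaves; apply it with the non-${\cal C}$ data held fixed. It requires $x_j,y_j$ to depend only on $a_0,\dots,a_{j-1}$. So define the $s$-window by $a_s>a_{j-1}/2$ rather than $a_s\ge a_j/2$; the double count is unchanged. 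Alternatively, keep your double count and finish with the paper's offset/stars-and-bars count, which needs no such lemma.

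What you call the main obstacle is not one. For $j<j'$ in ${\cal C}$ you have $a_{j'-1}\ge a_j\ge(1+\delta)a_{j-1}$, so the relevant values are $(1+\delta)$-separated. The extra $+1$ in each count is absorbed by the slack, e.g.\ $1+2\log(1/\delta)/\delta\le 4\log(1/\delta)/\delta$ for small $\delta$, which still leaves a constant well below $8e^2\log 2$. Finally, the factor $1/C$ in your first display belongs to the average, not the sum; your later computation has it right.
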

Here, the minus sign may seem problematic. However, since $\delta$ is a small positive real number, $-\log \delta$ is guaranteed to be positive.
\begin{proof}[Proof of Lemma \ref{ub8}]
Suppose that $j\in {\cal C}$ and that $a_j=a_s+a_t$ with $s\ge t$, say.  We then have $a_s\ge a_j/2$ and $$a_t=a_j-a_s\geq a_j-a_{j-1}\geq \frac{\delta}{1+\delta}a_j.$$ 

Define the integer-valued functions $\xi_1(j)$ and $\xi_2(j)$ in the following manner. We set $\xi_1(j)$ as the maximal value of $\xi$ such that
$$
a_{j-\xi}\ge a_j/2,
$$
so that for a fixed $j\in {\cal C}$, the number of ways of choosing $s$ is at most $\xi_1(j)$.  We define $\xi_2 (j)$ as the largest value of $\xi$ such that
$$
a_{j-\xi}\ge \frac{\delta}{1+\delta}a_j,
$$
so that for a fixed $j\in {\cal C}$ the number of ways of choosing $t$ is at most $\xi_2(j)$. 
For every such choice of indices, we have
$$
1\le j-s\le\xi_1(j),
\qquad
1\le j-t\le\xi_2(j).
$$

Let $c_1=c_1(j)$ be the number of elements of ${\cal C}$ in the  interval $(j-\xi_1(j), j]$.  From the definition of ${\cal C}$, we have $(1+\delta)^{c_1(j)}\le 2$, which implies
$$
c_1(j)\log(1+\delta)\le \log 2.
$$
Given that for $\delta$ sufficiently small, we have $\log(1+\delta)>\delta/2$, we obtain that
$$
c_1(j)\le \frac{2\log 2}{\delta}.
$$
Each integer belongs to at most $2\log 2/\delta$ of the
intervals $(j-\xi_1(j),j]$. Indeed, among the intervals
containing that integer, choose the one with the largest
right endpoint. All the right endpoints of these intervals
belong to the chosen interval and to ${\cal C}$, so their
number is bounded by the preceding estimate.
Counting the integer points in these intervals gives
$$
\sum_{j\in {\cal C}} \xi_1(j) \le \frac{2\log 2(m+r)}{\delta}.
$$
Since $j-s\le\xi_1(j)$, the $C$ positive integers $j-s$,
for $j\in{\cal C}$, have total sum at most
$2\log 2(m+r)/\delta$.
Their partial sums form a strictly increasing list of
$C$ integers. Because the positions $j$ are fixed,
the distances $j-s$ determine all the chosen indices $s$.
Thus the number of possible lists of indices $s$ is at most
\begin{equation}\label{new1}
\binom{\lfloor 2(\log 2)(m+r)/\delta\rfloor}{C}
\le
\left(\frac{2e(\log 2)(m+r)}{\delta C}\right)^C.
\end{equation}

Let $c_2=c_2(j)$ be the number of elements of ${\cal C}$ belonging to the interval ${(j-\xi_2(j),j]}$. We have $(1+\delta)^{c_2}\le (1+\delta)/\delta$, from which we deduce that
$$
c_2\log(1+\delta) \le \log \left(\frac{1+\delta}{\delta}\right).
$$
Using again the fact that $\log(1+\delta)>\delta/2$ holds for $\delta$ small enough, and similarly $\log((1+\delta)/\delta)\le-2\log\delta$, we have
$$
c_2\le \frac{-4\log \delta}{\delta}.
$$
By the same interval counting argument,
\begin{equation}\label{eJuly}
\sum_{j\in {\cal C}} \xi_2(j)\le \frac{-4 (m+r)\log \delta}{\delta}.
\end{equation}
Similarly, since $j-t\le\xi_2(j)$, equation \eqref{eJuly}
shows that the $C$ positive integers $j-t$ have total sum
at most $-4(m+r)\log\delta/\delta$.
Counting their possible partial sums, we find that the
number of possible lists of indices $t$ is at most
\begin{equation}\label{new2}
\binom{\lfloor-4(m+r)\log\delta/\delta\rfloor}{C}
\le
\left(\frac{-4e(m+r)\log\delta}{\delta C}\right)^C.
\end{equation}
From inequalities \eqref{new1} and \eqref{new2}, we conclude that the number of ways of choosing integers added in steps belonging to ${\cal C}$ is no larger than
$$
\left(\frac{-8e^2\log 2(m+r)^2\log \delta}{(\delta C)^2}\right)^{C},
$$
thus completing the proof of Lemma \ref{ub8}.
\end{proof}

Finally, we now focus on the number of ways of choosing the integers added in steps belonging to ${\cal D}$. We have the following.
\begin{lemma}\label{ub9}
The number of ways of choosing the integers added in steps belonging to ${\cal D}$ is smaller than or equal to
$$
\left(\frac{e(m+r+1)r}{(1-\log_2 \gamma)D}\right)^D.
$$
\end{lemma}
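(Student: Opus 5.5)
The plan is to encode the choices for all steps of ${\cal D}$ at once, as a set of $D$ pairs of indices $(s,t)$. The index $t$ of the smaller summand will be left completely free, with at most $m+r+1$ choices in $\{0,1,\ldots,m+r\}$. The index $s$ of the larger summand will be confined to a set of size at most $B+C+D$, which Lemma \ref{ub2} bounds by $r/(1-\log_2\gamma)$. A single binomial coefficient then gives the claimed bound.

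\emph{Step 1: locating $s$.} Let $j\in{\cal D}$ with $a_j=a_s+a_t$ and $s\ge t$, so that $a_s\ge a_j/2$ and $s\le j-1$. I claim that no index in $(s,j]$ belongs to ${\cal A}$.
\begin{itemize}
\item The index $j$ itself lies in ${\cal D}$, hence not in ${\cal A}$.
\item Suppose some $u\in(s,j-1]$ were a doubling step. Then
$$
a_{j-1}\ge a_u=2a_{u-1}\ge 2a_s\ge a_j,
$$
which contradicts $a_j>a_{j-1}$.
\end{itemize}
Hence $s+1\in(s,j]$ is a nondoubling step. Therefore $s$ always lies in the set
$$
S:=\{x-1:\ x\in{\cal B}\cup{\cal C}\cup{\cal D}\},
$$
and $\#S\le B+C+D\le r/(1-\log_2\gamma)$ by Lemma \ref{ub2}.

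\emph{Step 2: encoding all $D$ choices simultaneously.} Consider the map sending the data of the ${\cal D}$-steps to the set of pairs $\{(s_j,t_j): j\in{\cal D}\}\subset S\times\{0,\ldots,m+r\}$. The sets ${\cal A},{\cal B},{\cal C},{\cal D}$ are fixed, as are the choices already made for all other steps. We need this map to be injective in order to count by subsets.
\begin{itemize}
\item Distinct $j$ give distinct pairs, because the pair determines the sum $a_j$ and the chain is strictly increasing.
\item From the unordered set of pairs we recover which pair belongs to which $j\in{\cal D}$: the $j$-th element of ${\cal D}$ receives the pair realizing the $j$-th smallest sum. These sums are computable recursively along the chain, since each pair only refers to earlier indices.
\end{itemize}
I expect this injectivity to be the main point needing care, because the values $a_{s_j}$ themselves depend on earlier choices. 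The way around it is to reconstruct the chain step by step: at each $j\in{\cal D}$, take among the remaining pairs whose indices are already determined the one producing the next term. Alternatively, one can avoid the subtlety by bounding directly by $\binom{N}{D}$ after ordering the pairs in any fixed canonical manner.

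\emph{Step 3: the count.} The number of ways is then at most
$$
\binom{(m+r+1)\,\#S}{D}\le\left(\frac{e(m+r+1)\,\#S}{D}\right)^D\le\left(\frac{e(m+r+1)r}{(1-\log_2\gamma)D}\right)^D,
$$
using ${N\choose D}\le (eN/D)^D$ exactly as in the proof of Lemma \ref{ub4}, and Step 1 for $\#S$.
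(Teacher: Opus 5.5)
Your proof is correct and is essentially the paper's argument: the paper also shows that the index just after the larger summand cannot be a doubling step, so the pairs lie in a set of size $(m+r+1)(B+C+D)$, and it then concludes with Lemma~\ref{ub2} and $\binom{N}{D}\le (eN/D)^D$. Your smallest-sum rule for recovering which pair is used at which step is the right justification for counting unordered sets of pairs; the paper leaves this implicit here and states exactly this rule in the proof of Lemma~\ref{ub13}. By contrast, your suggested alternative of ordering the pairs in a fixed canonical way does not by itself recover that assignment, so drop it.
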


\begin{proof}
For $j\in {\cal D}$, write $a_j=a_t+a_s$ with $j-1\ge t\ge s$.  Then $a_t\ge a_j/2$, otherwise $a_t+a_s\le 2a_t<a_j$. Suppose that $a_{t+1}=2a_t$, in which case $t<j-1$, otherwise $j$ would be a doubling step, implying that $a_t= a_{t+1}/2<a_j/2$, a contradiction.   The pair $(s,t+1)$ must therefore belong to the product set
$$
\left({\cal A}\cup {\cal B}\cup {\cal C}\cup {\cal D}\cup \{0\}\right) \times \left({\cal B}\cup {\cal C}\cup {\cal D}\right).
$$
In other words, in an additive step, the largest added integer may not precede a doubling step.
The cardinality of the set $\left({\cal A}\cup {\cal B}\cup {\cal C}\cup {\cal D}\cup \{0\}\right) \times \left({\cal B}\cup {\cal C}\cup {\cal D}\right)$ is equal to
$$
(m+r+1)(B+C+D),
$$
which we can bound above using Lemma \ref{ub2} by
$$
\frac{(m+r+1)r}{1-\log_2 \gamma}.
$$
The number of ways of choosing $D$ pairs of indices $(s,t+1)$ among the set \\ \hbox{$\left({\cal A}\cup {\cal B}\cup {\cal C}\cup {\cal D}\cup\{0\}\right) \times \left({\cal B}\cup {\cal C}\cup {\cal D}\right)$} is thus at most
$$
{\lfloor (m+r+1)r/(1-\log_2 \gamma)\rfloor\choose D}.
$$
We conclude that the number of ways of choosing integers added in steps belonging to the set ${\cal D}$ is smaller than or equal to
$$
 {\lfloor (m+r+1)r/(1-\log_2 \gamma )\rfloor \choose D}\le \left(\frac{e(m+r+1)r}{(1-\log_2 \gamma)D}\right)^D,
$$
thus completing the proof of Lemma \ref{ub9}.
\end{proof}

We are now ready to complete the proof of the upper bound stated in Theorem \ref{t1}.
Gathering our results, namely Lemmas \ref{ub1}, \ref{ub4}, \ref{ub5}, \ref{ub8} and \ref{ub9}, we obtain that
\begin{eqnarray}\label{ll}
G(m,r)&\le& (m+r)^3 4^{r/(1-\log_2 \gamma)}e^{2r/(e(1-\log_2 \gamma))}\left(\frac{e(m+r)}{C+D}\right)^{C+D}\nonumber\\
& &\times \left(\frac{-8e^2\log 2(m+r)^2\log \delta}{(\delta C)^2}\right)^{C}\left(\frac{e(m+r+1)r}{(1-\log_2\gamma)D}\right)^D,
\end{eqnarray}
where the right-hand side is to be understood as the maximum possible value for the valid choices of $C$ and $D$. Assuming $\log m=o(r)$ and $r<m$, we deduce from (\ref{ll}) that there exists a positive computable constant $K$ such that for $m$  sufficiently large,
\begin{equation}\label{filon4}
G(m,r)\le K^r\left(\frac{-m^3\log \delta}{(\delta C)^2(C+D)}\right)^{C}\left(\frac{m^2r}{D(C+D)}\right)^D.
\end{equation}

To conclude, we need a technical lemma that will provide constraints on $C$ and $D$.
\begin{lemma}\label{ub10}
The cardinalities $C$ and $D$ must satisfy the inequality
$$
D\le \frac{r-C(1-\log_2\gamma)}{1-\delta/\log 2}.
$$
\end{lemma}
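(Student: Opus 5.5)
We compare the logarithmic growth of the chain with its length. Since $a_{m+r}=n\ge 2^m$, we have
$$
m\log 2\le \log n=\sum_{j=1}^{m+r}\log\frac{a_j}{a_{j-1}}.
$$
Each step contributes according to its class. A step in ${\cal A}$ contributes exactly $\log 2$. A step in ${\cal B}$ contributes at most $\log 2$, and one in ${\cal C}$ at most $\log\gamma$. A step in ${\cal D}$ contributes less than $\log(1+\delta)\le\delta$. This gives
$$
m\log 2\le (A+B)\log 2+C\log\gamma+D\delta .
$$

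Next we substitute $A+B=m+r-C-D$ and divide by $\log 2$:
$$
m\le m+r-C-D+C\log_2\gamma+\frac{D\delta}{\log 2}.
$$
Rearranged, this reads
$$
D\Bigl(1-\frac{\delta}{\log 2}\Bigr)\le r-C(1-\log_2\gamma).
$$
Since $\delta$ is small, the factor $1-\delta/\log 2$ is positive, and dividing by it yields the claimed inequality.

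The only delicate point is the bound for small steps. We use $\log(1+x)\le x$ for $x\ge 0$, together with $a_j\ge a_{j-1}$, so every term in the sum is nonnegative and no step can make a negative contribution. Beyond that the argument is a direct bookkeeping estimate, in the spirit of Lemma \ref{ub2}.
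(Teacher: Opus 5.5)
Your argument is correct and complete: comparing $\log n\ge m\log 2$ with the telescoping sum of $\log(a_j/a_{j-1})$, bounded by $\log 2$, $\log 2$, $\log\gamma$ and $\log(1+\delta)\le\delta$ on ${\cal A},{\cal B},{\cal C},{\cal D}$ respectively, gives exactly the stated inequality once $\delta<\log 2$. The paper gives no argument here and only cites Lemma 4.2 of \cite{DKN}; your bookkeeping is a natural self-contained proof of it (unlike Lemma \ref{ub2}, no Sch\"onhage-type argument is needed, since ${\cal B}$-steps are bounded trivially by $\log 2$), and your closing remark on nonnegativity of the summands is unnecessary, as only per-step upper bounds are used.
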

\begin{proof}
This is Lemma 4.2 in \cite{DKN}.
\end{proof}
Using Lemma \ref{ub10} and inequality \eqref{filon4}, it follows that
\begin{equation}\label{barb}
G(m,r)\le K^r\left(\frac{-m^3\log \delta}{\delta^2 C^3}\right)^{C} \left(\frac{ m^2r}{(r-C(1-\log_2\gamma))^2}\right)^{\frac{r-C(1-\log_2\gamma)}{1-\delta/\log 2}}.
\end{equation}
We have
\begin{eqnarray*}
& &\left(\frac{ m^2r}{(r-C(1-\log_2\gamma))^2}\right)^{\frac{r-C(1-\log_2\gamma)}{1-\delta/\log 2}}\\
&=&\left(\frac{m^2}{r}\right)^{\frac{r-C(1-\log_2\gamma)}{1-\delta/\log 2}}\left(\frac{r^2}{(r-C(1-\log_2\gamma))^2}\right)^{\frac{r-C(1-\log_2\gamma)}{1-\delta/\log 2}}\\
&\le &\left(\frac{m^2}{r}\right)^{\frac{r-C(1-\log_2\gamma)}{1-\delta/\log 2}} e^{2r}.
\end{eqnarray*}
Using this inequality in \eqref{barb}, we obtain
\begin{eqnarray}
G(m,r) &\le& (e^2K)^r\left(\frac{-m^3\log \delta}{\delta^2 C^3}\right)^{C}\left(\frac{m^2}{r}\right)^{\frac{r-C(1-\log_2\gamma)}{1-\delta/\log 2}} \nonumber\\
&=& (e^2K)^r \left(\frac{m^2}{r}\right)^{\frac{r}{1-\delta/\log 2}} \nonumber \\
& & \qquad \times \left(\frac{-m^3\log \delta}{\delta^2 C^3}\right)^{C}\left(\frac{m^2}{r}\right)^{\frac{-C(1-\log_2\gamma)}{1-\delta/\log 2}}.\label{novel1}
\end{eqnarray}
In order to find an upper bound for this last expression, we relax the condition that $C$ has to be an integer and look for the maximum value of the quantity
\begin{equation}\label{filon}
C\log\left(\frac{-m^3\log \delta}{\delta^2 C^3}\right)-\frac{C(1-\log_2\gamma)}{1-\delta/\log 2}\log(m^2/r)
\end{equation}
considered as a function of $C$, where $C$ is treated as a real number. Taking the derivative of \eqref{filon} and equating it to zero yields the equation
\begin{equation}\label{filon2}
\log\left(\frac{-m^3\log \delta}{\delta^2 C^3}\right)-\frac{(1-\log_2\gamma)}{1-\delta/\log 2}\log(m^2/r)-3=0.
\end{equation}
Solving equation (\ref{filon2}) for $C$, we get that
$$
\frac{-m^3\log \delta}{\delta^2 C^3} =e^3 \left(\frac{m^2}{r}\right)^{(1-\log_2\gamma)/(1-\delta/\log 2)},$$
implying that
$$C^3=\left(\frac{-e^{-3}\log \delta}{\delta^2}\right)\frac{m^{3-(2-2\log_2\gamma)/(1-\delta/\log 2)}}{r^{-(1-\log_2\gamma)/(1-\delta/\log 2)}}.
$$
From this and using the assumption $r<m$, we deduce
$$
C^3<\left(\frac{-e^{-3}\log \delta}{\delta^2}\right)m^{3-(1-\log_2\gamma)/(1-\delta/\log 2)}.
$$
It follows that the value of $C$ maximizing (\ref{filon}) satisfies $C<m^k$ for any $k>1-(1-\log_2 \gamma)/3\approx 0.8981$.

Using this in inequality (\ref{novel1}), it follows that
$$
G(m,r) \le  (e^2K)^r\left(\frac{-(\log \delta) m^{3}}{\delta^2}\right)^{m^k}\left(\frac{m^2}{r}\right)^{r(1+2\delta/\log 2)}.
$$
Choosing $\delta=1/\log m$ and observing that we therefore have $$ \frac{-\log \delta}{\delta^2} = (\log \log m)(\log m)^2 \le (\log m)^3,$$ this yields
\begin{eqnarray*}
G(m,r)&\le& (e^2K)^r\left((\log m)^3 m^{3}\right)^{m^k}\left(\frac{m^2}{r}\right)^r\left(\frac{m^2}{r}\right)^{2r/(\log m \log 2)}\\
&\le& (e^2Ke^{4/\log 2})^r\left( (\log m)^3 m^{3}\right)^{m^k}\left(\frac{m^2}{r}\right)^r.
\end{eqnarray*}
Provided that $r>m^{\eta}$ where $\eta$ is any constant strictly larger than $k$, this implies that there exists a computable constant $K_4$ such that for $m$ and $r$ sufficiently large,
$$
G(m,r)\le K_4^r\left(\frac{m^2}{r}\right)^{r},
$$
thereby  completing the proof of the upper bound in Theorem \ref{t1}.

\section{Proving the upper bound in Theorem \ref{t2}} 

Thus far, we have provided an upper bound for the number of addition chains. However, we did not take into account the possibility that several addition chains can lead to the same integer.  Doing so in this section will help us obtain a sharper upper bound for $F(m,r)$.
The proof of the upper bound in Theorem \ref{t2} will use some of the ideas developed in the proof of the upper bound in Theorem \ref{t1}. We begin by introducing two notions already used in \cite{DKN}.

\begin{definition}\label{d1}  Let $1=a_0<\cdots<a_{m+r}=n$ be an addition chain. Suppose that there exists another addition chain $1=b_0<\cdots<b_{m+r}=n$  of the same length and leading to the same integer $n$ and such that $b_j\le a_j$ for all $j$. Suppose also that there exists a $j$ for which $b_j<a_j$. Then we consider that the chain $a_0,\ldots,a_{m+r}$ is {\it invalid}. Otherwise, we say that the chain is {\it valid}.
\end{definition}

For example, the chain $1,2,4,5$ is invalid because of the existence of the chain $1,2,3,5$.
Intuitively, the idea behind Definition \ref{d1} is to enable us to select a canonical addition chain of minimal length among all the  addition chains of minimal length leading to $n$.

\begin{definition} Let $1=a_0<\cdots <a_{m+r}=n$ be an addition chain.  We define an {\it addition block} (of consecutive additive steps) as a maximal sequence of consecutive steps in ${\cal B}\cup {\cal C}\cup {\cal D}$. We also consider that $(0)$ corresponding to $a_0=1$ is an addition block.
\end{definition}

For example, if we consider the chain
$$
a_0=1,\ a_1=2,\ a_2=4,\ a_3=5,\ a_4=7, a_5=14, \ a_6=15,
$$
we have the three addition blocks $(0)$, $(3,4)$ and $(6)$.  We will say that the second block is of length 2 while the third block is of length 1.

Specifying the number of blocks and their lengths (that is, the block structure of an addition chain) provides constraints on the number of ways of choosing the sets ${\cal A}$, ${\cal B}$, ${\cal C}$ and ${\cal D}$ as is stated in the following lemma.

\begin{lemma}\label{t21}
Assume that an addition chain has $k$ addition blocks (excluding the block $a_0=1$). Assume also that the cardinalities $A$, $B$, $C$ and $D$ are fixed. Then,  the number of ways of choosing the sets ${\cal A}$, ${\cal B}$, ${\cal C}$ and ${\cal D}$ is smaller than or equal to
$$
{B+C+D-1\choose k-1}{A+1\choose k}3^{B+C+D}.
$$
\end{lemma}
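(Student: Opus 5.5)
The plan is to encode the partition of the steps $\{1,\ldots,m+r\}$ into ${\cal A},{\cal B},{\cal C},{\cal D}$ in two stages. First we fix the positions of the nondoubling steps, that is, the set ${\cal B}\cup{\cal C}\cup{\cal D}$. Then we label each nondoubling step by one of the three letters $B$, $C$, $D$.

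The second stage is immediate. Once ${\cal B}\cup{\cal C}\cup{\cal D}$ is known, assigning each of its $B+C+D$ elements to one of the three classes can be done in at most $3^{B+C+D}$ ways. This crude bound ignores the fixed cardinalities $B,C,D$, which is harmless for an upper bound.

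For the first stage we use the block structure. The steps $1,\ldots,m+r$ form a word of $A$ letters ``doubling'' and $B+C+D$ letters ``nondoubling''. The nondoubling letters are grouped into exactly $k$ maximal runs (the $k$ addition blocks other than $(0)$). Such a word is determined by two pieces of data.
\begin{itemize}
\item \textbf{Block lengths.} These are $k$ positive integers summing to $B+C+D$, i.e.\ a composition of $B+C+D$ into $k$ parts. There are exactly $\binom{B+C+D-1}{k-1}$ of them (stars and bars).
\item \textbf{Block placement.} We must say where the $k$ blocks sit among the doubling steps. The $A$ doubling steps create $A+1$ gaps: before the first, between consecutive ones, and after the last. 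Maximality of the blocks means distinct blocks are separated by at least one doubling step, so they occupy $k$ distinct gaps. With the order of the blocks fixed (they are listed left to right), this choice is a $k$-subset of the $A+1$ gaps, giving $\binom{A+1}{k}$ possibilities.
\end{itemize}
The word is recovered by reading these data left to right, so the number of possible sets ${\cal B}\cup{\cal C}\cup{\cal D}$ with $k$ blocks is at most $\binom{B+C+D-1}{k-1}\binom{A+1}{k}$. The set ${\cal A}$ is then its complement.

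Multiplying the two stages gives the stated bound. One could also allow for step $1$ being forced to be a doubling step (since $a_1=2$ always); this only reduces the count and does not affect the inequality.

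There is no real obstacle here. The only point needing care is the injectivity of the encoding. The pair (composition, choice of gaps) determines the positions uniquely, because consecutive chosen gaps are separated by at least one doubling step, which is exactly the maximality of the blocks. One should also check the edge cases $k=0$ and a block sitting at the very end ending at $m+r$. Both are covered by the $A+1$ gaps, which include the final one.
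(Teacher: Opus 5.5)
Your proposal is correct and follows the paper's proof: you count compositions of $B+C+D$ into $k$ parts to get $\binom{B+C+D-1}{k-1}$, place the blocks among the doubling steps to get $\binom{A+1}{k}$, and label the nondoubling steps crudely to get $3^{B+C+D}$. Your formulation as a choice of $k$ of the $A+1$ gaps is a slightly more explicit justification of the factor $\binom{A+1}{k}$ than the paper's wording about the number of elements of ${\cal A}$ between consecutive blocks, but it is the same count.
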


\begin{proof}
Let $L_1,\ldots, L_k$ be the lengths of the different addition blocks.  Then by definition we have
$$
L_1+L_2+\cdots+L_k=B+C+D.
$$
Hence, provided  $k$, $B$, $C$ and $D$ are all fixed, the number of ways of choosing the lengths of the blocks is at most
$$
{B+C+D-1\choose k-1}.
$$
Moreover, the number of ways to choose the number of elements of ${\cal A}$ between consecutive addition blocks is equal to
$$
{A+1\choose k}.
$$
Note that since each addition block is separated by at least one element in ${\cal A}$, it follows that  $A\ge k-1$, thereby implying that $\displaystyle{{A+1\choose k}}$ is indeed well defined.

Finally, the number of ways of choosing how the elements of ${\cal B}\cup {\cal C}\cup{\cal D}$ are distributed  within the blocks is smaller than or equal to
$$
3^{B+C+D}.
$$
Gathering the above three bounds, the proof of Lemma \ref{t21} is complete.
\end{proof}

In the following, when an element $a_j$ of an addition chain can be expressed in several ways as $a_j=a_s+a_t$ with $s\le t$ and $a_s,\, a_t$ previous members of the chain, we consider the choice for which $s$ is the smallest possible.

We consider the following definition relating addition blocks to each other.

\begin{definition} We say that an addition block is \emph{connected} to the block $a_0=1$ if $a_0=1$ is used in an addition step of this block.  We say that an addition block $B_1=(j,j+1,\ldots,j+t)$ is \emph{connected} to another addition block $B_2$ if one of the elements $a_{j-1},\ldots, a_{j+t}$ is used in an additive step of $B_2$. We extend this definition by transitivity and symmetry, meaning that if a block $B_1$ is connected to a block $B_2$, then $B_2$ is connected to $B_1$ and if $B_2$ is connected to a block $B_3$, then $B_1$ is connected to $B_3$. We can think of this notion as providing a graph structure on addition blocks.
\end{definition}

\begin{definition}
Consider an addition block $(j,\, j+1,\ldots, j+t)$ such that each of the integers $a_j, a_{j+1},\ldots, a_{j+t}$ is even.  We say that we perform a {\it left shift operation} on this block if in the addition chain we replace the sequence $a_{j-1}=2a_{j-2}, a_j, \ldots, a_{j+t}$ by the sequence $a_{j}/2,a_{j+1}/2,\ldots, a_{j+t}/2, a_{j+t}$.  Remark that after performing this operation, the resulting sequence of integers may no longer be an addition chain. 
\end{definition}

For example, consider an addition chain beginning with $1,2,4,8,12,14,18,22$. This chain has an addition block with corresponding integers $12,14,18,22$. Performing a left shift operation on this block transforms the addition chain to  $1, 2, 4,\mathbf{6, 7, 9, 11},  22$.

With this definition we can show the following lemma. 
 
\begin{lemma}\label{t23} Suppose that an addition chain contains an addition block that is not connected to the block corresponding to $a_0=1$. Then the addition chain is not valid.
\end{lemma}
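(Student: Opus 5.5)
The plan is to show that an addition block not connected to $(0)$ can be ``halved'' together with its whole connected component, using left shift operations. This produces a chain of the same length, leading to the same $n$, which is termwise $\le$ the original and strictly smaller somewhere. By Definition \ref{d1} the original chain is then invalid.

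Let $\mathcal{S}$ be the set of addition blocks connected (in the transitive, symmetric sense) to the given block. By hypothesis no block of $\mathcal{S}$ is connected to $(0)$. In particular no block of $\mathcal{S}$ starts at index $1$, since then $a_0$ would lie in its range $a_{j-1},\ldots,a_{j+t}$, and $a_0$ is used in the first additive step. So for every block $(j,\ldots,j+t)\in\mathcal{S}$ we have $j\ge 2$. Step $j-1$ is a doubling step because blocks are maximal, so $a_{j-1}=2a_{j-2}$.

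\textbf{Parity.} First I will show by induction along the chain that every element $a_i$ with $i$ in a block of $\mathcal{S}$ is even. Take an additive step $a_i=a_s+a_u$ inside such a block. Each operand is of one of two kinds.
\begin{itemize}
\item The operand $a_s$ has index in the range $[j'-1,j'+t']$ of some block $B'$. Then $B'$ is connected to the current block, so $B'\in\mathcal{S}$. Either $s=j'-1$, and $a_s$ is a doubling output, or $a_s$ lies in a block of $\mathcal{S}$ at an earlier index and is even by induction.
\item Otherwise $s$ lies in no block range and $s\neq 0$. Then $s$ is a doubling step and $a_s=2a_{s-1}$.
\end{itemize}
Either way both operands are even, so $a_i$ is even. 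The case $s=0$ is excluded because the block is not connected to $(0)$.

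\textbf{The modified sequence.} Next I perform the left shift operation simultaneously on every block of $\mathcal{S}$. The new sequence $b$ has the same length and agrees with $a$ outside the modified positions. For a block $(j,\ldots,j+t)$ of $\mathcal{S}$ it is given by
$$
b_{i-1}=\frac{a_i}{2}\quad (j\le i\le j+t),\qquad b_{j+t}=a_{j+t}.
$$
I then check the following properties.
\begin{itemize}
\item The last term is unchanged, since the last element of each block is retained; hence $b_{m+r}=n$.
\item Monotonicity holds: $b_{j-2}=a_{j-2}=a_{j-1}/2<a_j/2=b_{j-1}$, and $b_{j+t-1}=a_{j+t}/2<a_{j+t}=b_{j+t}$.
\item $b\le a$ termwise, with strict inequality $b_{j-1}=a_j/2<a_{j-1}$ because step $j$ is not a doubling step.
\item Each new term is a sum of earlier terms. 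Write $b_{i-1}=a_s/2+a_u/2$.
  \begin{itemize}
  \item If an operand $a_s$ comes from a block of $\mathcal{S}$ (including the $j'-1$ endpoint), then $a_s/2$ is present in $b$ at index $s-1$, which is before $i-1$.
  \item If $a_s$ is an outside doubling output, then $a_s/2=a_{s-1}$, and position $s-1$ is unmodified.
  \end{itemize}
\item Doubling steps are preserved. The step after a block, $a_{j+t+1}=2a_{j+t}$, still holds because $a_{j+t}$ is kept. Any later reference to $a_{j+t}$ or to interior block elements is handled by the same two cases above.
\end{itemize}

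The main obstacle is the index bookkeeping in this last check. I must make sure that every element of the original chain used outside $\mathcal{S}$ is still present, with the same value, at an index no later than before. I must also verify that $a_{s-1}$, for an outside doubling operand, is never itself a modified position. The latter holds because a modified position $s-1$ would put $s$ in the range of a block of $\mathcal{S}$, contradicting the case assumption.

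This is exactly where the definition of connectedness, which includes the element $a_{j-1}$ preceding each block, is needed: it guarantees that unconnected blocks only interact with $\mathcal{S}$ through doubling outputs. Once these checks are done, $b$ is an addition chain of length $m+r$ to $n$ with $b\le a$ and $b\ne a$. By Definition \ref{d1}, $a$ is invalid.
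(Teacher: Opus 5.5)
Your proof is correct and takes essentially the same route as the paper. You show by induction that every term of the unconnected blocks is even, apply the left shift to those blocks simultaneously, and check that the result is a termwise smaller chain of the same length ending at $n$, which contradicts validity. The differences are cosmetic: you shift only the connected component $\mathcal{S}$ of the given block instead of all blocks not connected to $a_0=1$, and you leave implicit two easy checks that the paper states, namely $b_{j+t}=a_{j+t}=2b_{j+t-1}$ for each retained endpoint, and that additive steps in blocks outside $\mathcal{S}$ use only unmodified positions by the definition of connectedness.
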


\begin{proof} Assume that there are $s$ addition blocks, say $B_1,B_2,\ldots, B_s$, that are not connected to $a_0=1$, and that these blocks are ordered increasingly. One can first observe that all the elements of all the blocks are even. For $B_1$, this follows from the fact that each element in the block is obtained from adding elements in ${\cal A}$ or elements in the block $B_1$. The fact that the elements of other blocks are even can be shown recursively. If we perform a left shift operation on each of these blocks,
the resulting sequence is still an addition chain. Indeed, consider a chosen addition $a_i=a_s+a_t$ in a selected block. A parent belonging to a selected block has its half already inserted. A parent with a doubling index $h$ has half $a_{h-1}$,
which is unchanged, since every modified index immediately precedes a nondoubling step. Thus
$$
a_i/2=a_s/2+a_t/2
$$
is a sum of two earlier terms of the new sequence.

No chosen addition in an unselected block uses a changed term, by the definition of connections. Each retained block endpoint is obtained by doubling its new predecessor, and unchanged
doubling steps retain their predecessors. The new sequence increases strictly. Every changed coordinate decreases strictly, since $a_{i+1}/2<a_i$ whenever $i+1$ is an additive step. We have therefore obtained a smaller chain of the same length and endpoint, contradicting validity.
\end{proof}





We divide the set ${\cal D}$ (or ${\cal C}$) into two disjoint subsets ${\cal D}={\cal D}_1\cup {\cal D}_2$  (or ${\cal C}={\cal C}_1\cup {\cal C}_2$). If $j\in {\cal D}$ (or $j\in {\cal C}$), we write $a_j=a_s+a_t$ with $t\le s$.  We say that $j\in {\cal D}_1$ (or $j\in {\cal C}_1$) if either $t$ or $t+1$ can belong to the set ${\cal B}\cup {\cal C}\cup {\cal D}\cup \{0\}$. Otherwise, we say that $j\in {\cal D}_2$ (or ${\cal C}_2$, respectively). We denote by $D_1$ and $D_2$ (or $C_1$ and $C_2$) the respective cardinalities of these sets.  In other words, an additive step belongs to ${\cal C}_1$ or ${\cal D}_1$ if an element belonging to ${\cal B}\cup {\cal C}\cup {\cal D}\cup\{0\}$ can be used as the smaller integer in the corresponding addition.

Because of Lemma \ref{t23}, all the blocks are connected to $a_0=1$, hence there must be at least $k$ additive steps making connections between blocks. It follows that $C_1+D_1\ge k$ from which we get the following lemma. 

\begin{lemma}\label{ub12}
Let $1=a_0<\cdots<a_{\ell}=n$ be a valid addition chain with $k$ addition blocks (excluding the block $a_0=1$).  Then
$$
D_1\ge \max(k-C,0).
$$
\end{lemma}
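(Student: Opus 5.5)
The plan is to obtain the inequality $C_1+D_1\ge k$ and then use the trivial bound $C_1\le C$. This gives $D_1\ge k-C_1\ge k-C$. Since $D_1\ge 0$ always, the claimed bound $D_1\ge\max(k-C,0)$ follows. Everything therefore rests on proving $C_1+D_1\ge k$ for a valid chain with $k$ addition blocks, besides $(0)$.

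For this I will use Lemma \ref{t23}. Because the chain is valid, every one of the $k$ blocks is connected, in the transitive-symmetric sense, to the block $(0)$. Consider the graph whose vertices are the $k+1$ blocks. Put an edge between two blocks whenever an additive step of one uses an element $a_{j-1},\ldots,a_{j+t}$ attached to the other block $(j,\ldots,j+t)$, or uses $a_0$. Since this graph is connected, it has at least $k$ edges, and I will charge each edge to an additive step realizing it.

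The key step is to show that a step realizing a connection to a \emph{different} block lies in ${\cal C}_1\cup{\cal D}_1$, or else is absorbed in a way that does not create a spurious edge. For a step in ${\cal C}\cup{\cal D}$ at position $i$, write $a_i=a_s+a_t$ with $t\le s$, choosing $s$ minimal as stipulated. I would argue in three parts.
\begin{itemize}
\item The larger summand $a_s$ satisfies $a_s\ge a_i/2$. As in the proof of Lemma \ref{ub9}, $s+1$ cannot be a doubling step before $i$, so $s$ lies in the current block or immediately precedes it. Hence $a_s$ never creates a connection to another block.
\item If $a_t$ is attached to another block, then $t$ or $t+1$ lies in ${\cal B}\cup{\cal C}\cup{\cal D}\cup\{0\}$. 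This is precisely the defining condition for $i\in{\cal C}_1\cup{\cal D}_1$.
\item Steps in ${\cal B}$ do not create external connections, by the argument in the proof of Lemma \ref{ub5}. There, $[t+1,i-1]$ contains no element of ${\cal A}\cup{\cal B}$, so both summands lie inside the current block or in the element just before it.
\end{itemize}
Each additive step contributes at most one new edge, namely through its smaller summand. So the number of edges in a spanning tree is at most $C_1+D_1$, which gives $C_1+D_1\ge k$.

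The main obstacle is the first part, the localization of the larger summand. One must check carefully that $a_s$ with $s$ outside the current block is impossible, or that it only connects to the block immediately preceding the current one. In the latter case I would count this as a connection through $a_{j-1}$, which belongs to the current block by the definition. One must also make sure that such a connection does not arise from a doubling step preceding $s+1$. If this is delicate, the fallback is to note that $a_s\ge a_i/2>a_{i-1}/2$, while every element before the last doubling step preceding the block is at most $a_{j-1}/2$. This forces $s\ge j-1$, so the larger summand stays internal and only the smaller summand can produce edges.
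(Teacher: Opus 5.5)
Your proposal is correct and follows the paper's own argument. Lemma \ref{t23} forces the block graph to be connected, so at least $k$ additive steps realize inter-block connections. Each such step lies in ${\cal C}_1\cup{\cal D}_1$, which gives $C_1+D_1\ge k$ and hence $D_1\ge k-C_1\ge k-C$.

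Your extra justification, that ${\cal B}$-steps and larger summands stay inside their own block, fills in what the paper leaves implicit. For the larger summand, the bound $a_s\ge a_i/2>a_{i-1}/2\ge a_{j-2}$, which forces $s\ge j-1$, is the argument to use. The reasoning from Lemma \ref{ub9} alone only gives $s+1\notin{\cal A}$, which does not rule out $s$ lying in an earlier block.
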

Intuitively, the set ${\cal D}_1$ is more restrictive than the set ${\cal D}_2$ because the number of ways of choosing the integers added in a step belonging to ${\cal D}_1$ is smaller.  Lemma \ref{ub12} will thus enable us to improve the upper bound.


Following the steps of the proof of the upper bound of  Theorem \ref{t1}, we will show that the case $k\leq C$ can be neglected since it leads to only a small number of addition chains.

\begin{lemma}\label{ub13}
Let $k$ be the number of addition blocks in a valid addition chain (excluding the block $a_0=1$).  The number of ways of choosing integers added in steps belonging to ${\cal D}$ is smaller than or equal to
$$
{2\lfloor r/(1-\log_2 \gamma)\rfloor^2 \choose \max(0, k-C)}{\lfloor r/(1-\log_2 \gamma)\rfloor (m+r)\choose D-\max(0,k-C)}.
$$
\end{lemma}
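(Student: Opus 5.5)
Write $N:=\lfloor r/(1-\log_2\gamma)\rfloor$; by Lemma \ref{ub2} we have $B+C+D\le N$. The plan is to encode each step $j\in{\cal D}$ by a pair of indices, exactly as in the proof of Lemma \ref{ub9}. The encoding uses the larger summand's successor position together with the smaller summand's index, and we then exploit the extra structure of steps in ${\cal D}_1$. Recall from the proof of Lemma \ref{ub9} that for $j\in{\cal D}$ with $a_j=a_s+a_t$, $t\le s$, the larger summand $a_s$ cannot immediately precede a doubling step. Hence $s+1\in{\cal B}\cup{\cal C}\cup{\cal D}$ (with the convention that when $s=j-1$, the index $s+1=j$ is itself in ${\cal D}$). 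So the larger summand always ranges over a set of at most $N$ possibilities, and the step is determined by the pair $(t,s+1)$.

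Step one treats the restricted steps. By Lemma \ref{ub12}, a valid chain with $k$ addition blocks has $D_1\ge \max(0,k-C)$. We choose a subset of ${\cal D}_1$ of exactly $\max(0,k-C)$ steps; this requires fixing a canonical choice, say the first $\max(0,k-C)$ elements of ${\cal D}_1$ in increasing order. For each $j$ in this subset the smaller index satisfies $t\in{\cal B}\cup{\cal C}\cup{\cal D}\cup\{0\}$ or $t+1\in{\cal B}\cup{\cal C}\cup{\cal D}$. So $t$ is specified by an element $u\in{\cal B}\cup{\cal C}\cup{\cal D}\cup\{0\}$ together with one bit telling whether $t=u$ or $t=u-1$. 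This gives at most $2(N+1)$ options, which we absorb as roughly $2N$ with the $\lfloor\cdot\rfloor^2$ bookkeeping. Combined with the at most $N$ choices for $s+1$, each such step lies in a fixed ground set of size at most $2N^2$. Since the steps are listed in increasing order of $j$ and the positions of ${\cal D}$ are already fixed, the collection of pairs can be identified with a subset. This yields the factor $\binom{2N^2}{\max(0,k-C)}$.

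Step two handles the remaining $D-\max(0,k-C)$ steps of ${\cal D}$ exactly as in Lemma \ref{ub9}. The smaller index $t$ ranges over $\{0,\dots,m+r\}$ and $s+1$ ranges over a set of size at most $N$, giving a ground set of size about $N(m+r)$. Choosing these pairs as a set contributes $\binom{N(m+r)}{D-\max(0,k-C)}$, and multiplying the two factors gives the claimed bound.

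The main obstacle is injectivity in the ``choose a set'' step, together with the off-by-one sizes ($N+1$ versus $N$, and $m+r+1$ versus $m+r$). To show that a set of pairs determines the assignment of pairs to steps, I would first try to argue that distinct steps of ${\cal D}$ yield distinct pairs. If that fails, I would order the pairs by their larger index and use that the chosen $s$ is the one of smallest index among the representations. Failing that, I would accept an extra factor of at most $\max(0,k-C)!$-type overcount, which the paper absorbs into $K^r$ later. A further point to handle is that the split into the two parts must be made canonical, which is why the first $\max(0,k-C)$ elements of ${\cal D}_1$ are used. The boundary issues I would absorb via the convention $0\in$ the index set, or by noting that $N\ge1$ makes $2N^2\ge 2N(N+1)/2$ slack available.
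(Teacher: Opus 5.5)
Your encoding matches the paper's. The larger summand's successor lies in ${\cal B}\cup{\cal C}\cup{\cal D}$. You take the first $\max(0,k-C)$ elements of ${\cal D}_1$, with a representation witnessing membership, and you encode $t$ by an element plus a bit. The off-by-one worries are harmless. First, $t\le j-1\le m+r-1$. Second, if $x_1<\cdots<x_M$ list ${\cal B}\cup{\cal C}\cup{\cal D}$, then for $s+1=x_i$ there are at most $2i$ admissible $t\le s$, so the first ground set has at most $M(M+1)\le 2N^2$ elements.

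The genuine gap is the step you yourself call the main obstacle, and none of your options closes it.

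Knowing the positions of ${\cal D}$ does not tell you which pair of an unordered set belongs to which position. So your sentence that the pairs ``can be identified with a subset'' is unsupported. Distinctness of the pairs is necessary but not sufficient.

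Ordering by the larger index fails, because that index is neither monotone in $j$ nor injective. Take $1,2,4,\ldots,2^k,2^k+1,2^k+2,2^k+8$, with $2^k$ large compared with $1/\delta$ so that the last two steps lie in ${\cal D}$. The step $2^k+2=(2^k+1)+1$ is followed by $2^k+8$, whose only representation has the earlier larger summand $2^k$.

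A factorial overcount cannot be absorbed into $K^r$. Both $k-C$ and $D$ may be of order $r$, so the loss is $r^{\Theta(r)}$. That loss exactly cancels the gain of $\binom{X}{D}\le(eX/D)^D$ over $X^D$, and the final bound $F(m,r)\le K_2^r m^r$ depends on that gain. It would not give the stated inequality either.

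The missing idea is to use that the chain is strictly increasing, as the paper does. Fix the choices at the steps of ${\cal A}\cup{\cal B}\cup{\cal C}$, decode both recorded sets into index pairs $(t,s)$, and take their union. Then run through $j=1,\ldots,m+r$. When $j\in{\cal D}$, all $a_i$ with $i<j$ are already known. Among the not-yet-used pairs with both indices less than $j$, use the one with the smallest sum $a_s+a_t$.

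This rule is correct. The true pair of step $j$ is such a pair, with sum $a_j$. Any other unused pair is the true pair of some later step $j'\in{\cal D}$ and has sum $a_{j'}>a_j$. Hence the union determines the entire chain, which justifies counting subsets and hence the binomial coefficients. You also never need to recover which ${\cal D}$-steps formed the canonical ${\cal D}_1$ part; the canonical split serves only to define the encoding.
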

 \begin{proof} 
For this count, select the first $\max(0,k-C)$ steps belonging to ${\cal D}_1$, whose existence follows from Lemma \ref{ub12}, and for each select a representation satisfying the defining condition of ${\cal D}_1$.
The number of ways of choosing the pairs of indices for these steps is smaller than or equal to
$$
{2(B+C+D)^2\choose \max(0,k-C)}.
$$
Meanwhile, the remaining $D-\max(0,k-C)$ pairs can be chosen from the full set used in Lemma \ref{ub9}, giving at most
$$
{(B+C+D)(m+r)\choose D-\max(0,k-C)}
$$
possibilities.
The number of ways of choosing integers added in steps belonging to ${\cal D}$ is thus smaller than or equal to
\begin{eqnarray*}
&& {2(B+C+D)^2\choose \max(0,k-C)}
{(B+C+D)(m+r)\choose D-\max(0,k-C)}\\
&& \quad \le
{2\lfloor r/(1-\log_2\gamma)\rfloor^2\choose \max(0,k-C)}
{\lfloor r/(1-\log_2\gamma)\rfloor(m+r)
 \choose D-\max(0,k-C)},
\end{eqnarray*}
where again we made use of Lemma \ref{ub2}. Once the choices at the other steps have been fixed, the union of the two sets of recorded pairs determines their order of use: at each step belonging to ${\cal D}$, use the unused pair whose indices are already available and whose corresponding terms
have the smallest sum.
  \end{proof}

We are now ready to complete the proof of the upper bound in Theorem \ref{t2}. If $r\ge m\log 2/\log m$, the desired bound follows from
$
F(m,r)\le 2^m\le m^r.
$
We may therefore assume that $r<m\log 2/\log m$, and hence
$r=o(m)$. Using Lemmas \ref{ub1}, \ref{ub5}, \ref{ub8}, \ref{t21} and \ref{ub13}, we have that
\begin{eqnarray}
F(m,r)&\le& K^r{m+r+1\choose k}\left(\frac{-8e^2\log 2(m+r)^2\log\delta}{(\delta C)^2}\right)^C\nonumber\\
& &\times {2\lfloor r/(1-\log_2 \gamma)\rfloor^2 \choose \max(0, k-C)}{\lfloor r/(1-\log_2 \gamma)\rfloor(m+r) \choose D-\max(0,k-C)} \nonumber\\
&\le& (K')^r \left(\frac{m}{k}\right)^k\left(\frac{m^2(\log m)^3}{C^2}\right)^C{r^2\choose \max(0,k-C)}\nonumber \\
& &\times{rm\choose D-\max(0,k-C)}\label{2026}
\end{eqnarray}
for some positive computable constants $K$ and $K'$, and where the right-hand side is to be understood as the largest possible value over suitable choices of $k$, $C$ and $D$. If $k\leq C$ and under the assumption $r=o(m)$, inequality \eqref{2026} implies that for some constant $K''>0$ and for $m$ sufficiently large,
$$
F(m,r)\le (K'')^r\left(\frac{m^3(\log m)^3}{C^3}\right)^C\left(\frac{rm}{D}\right)^{D}.
$$
Using the bound
$$
D\le \frac{r-C(1-\log_2\gamma)}{1-\delta/\log 2}
$$
given in Lemma \ref{ub10}, we obtain from the above inequality that
\begin{eqnarray*}
F(m,r)&\le& (K'')^r\left(\frac{m^3(\log m)^3}{C^3}\right)^Cm^D\left(\frac{r}{D}\right)^{D}\\
&\le& (K''')^r\left(\frac{m^{2+\log_2\gamma}(\log m)^3}{C^3}\right)^Cm^r\\
&\le& (2K''')^r m^r
\end{eqnarray*}
for some positive constant $K'''$. Here we used that $(2+\log_2\gamma)/3<0.9$ and $r>m^{0.9}$. We can thus assume that $k>C$ in inequality \eqref{2026}. Under this hypothesis, we have
\begin{eqnarray*}
F(m,r)&\le& (K')^r \left(\frac{m}{k}\right)^k\left(\frac{m^2(\log m)^3}{C^2}\right)^C{r^2\choose k-C}{rm \choose D+C-k}\\
&\le& (K'')^r\left(\frac{m}{k}\right)^k\left(\frac{ m^2(\log m)^3}{C^2 }\right)^C \left(\frac{r^2}{k-C}\right)^{k-C}\left(\frac{rm}{D+C-k}\right)^{D+C-k}.
\end{eqnarray*}
From this, we obtain
\begin{eqnarray*}
F(m,r)&\le& (K'')^r \left(\frac{m}{k}\right)^k\left(\frac{ m^2(\log m)^3}{C^2 }\right)^C\left(\frac{r^2}{k}\right)^{k-C} m^{D+C-k}\\
& &\times \left(\frac{k}{k-C}\right)^{k-C}\left(\frac{r}{D+C-k}\right)^{D+C-k}\\
&\le&(K''')^r \left(\frac{m}{k}\right)^k\left(\frac{ m^2(\log m)^3}{C^2 }\right)^C\left(\frac{r^2}{k}\right)^{k-C} m^{D+C-k}\\
&=&(K''')^r \left(\frac{r^2}{k^2}\right)^k\left(\frac{ m^3(\log m)^3k}{r^2C^2 }\right)^Cm^D.
\end{eqnarray*}
Using Lemma \ref{ub10}, we have
$$
D\le
\frac{r-C(1-\log_2\gamma)}{1-\delta/\log 2}
\le
r-C(1-\log_2\gamma)+\frac{2r\delta}{\log 2}.
$$
Since $\delta=1/\log m$, it follows that
$$
m^D\le
\exp\Big(\frac{2r}{\log 2}\Big)
m^r m^{-C(1-\log_2\gamma)}.
$$
Using this in the previous inequality, we get
\begin{eqnarray*}
F(m,r)&\le&(K'''')^r \left(\frac{m^{2+\log_2\gamma}(\log m)^{3}}{rC^2}\right)^Cm^r\\
&\le& (K'''')^r(m^3)^{
m^{1+(\log_2\gamma)/2}(\log m)^{3/2}/\sqrt r
}
m^r.
\end{eqnarray*}
The last inequality follows by considering separately whether
$$
C\ge
\frac{m^{1+(\log_2\gamma)/2}(\log m)^{3/2}}{\sqrt r}
$$
or not, as in the first case.

Finally,
$$
\frac{
m^{1+(\log_2\gamma)/2}(\log m)^{5/2}
}{r^{3/2}}
=o(1),
$$
because $r>m^{0.9}$ and $(2+\log_2\gamma)/3<0.9$.
Thus the additional power of $m^3$ is $\exp(o(r))$.
After enlarging the positive computable constant $K_2$,
both cases give
$$
F(m,r)\le K_2^r m^r,
$$
which completes the proof of the upper bound in
Theorem \ref{t2}.

\section{Proving the lower bound in Theorem \ref{t2}}

The proofs of the lower bounds for $F(m,r)$ and $G(m,r)$ will be constructive ones.   For the sake of  exposition, we begin by recalling the method used by Brauer to prove his upper bound for $\ell(n)$.

\subsection{The Brauer approach}

In \cite{brauer}, for a fixed large integer $m$, Brauer provided a constructive way to build addition chains leading to any integer $n$  between $2^m$ and $2^{m+1}$. From this, an upper bound for $\ell(n)$ can be deduced.

The constructive process is described as follows.

\begin{enumerate}
\item Choose a positive integer $k$ such that $k<m$,
\item Start the chain by additive steps to obtain $1,2,3,\ldots, 2^k-1$,
\item Alternate $m/k$ sequences of $k$ doubling steps followed by the addition of an integer obtained in Part 2.
\end{enumerate}

In Part 3 of this process, we implicitly assumed that $k$ divides $m$, but the more general case is not difficult to deduce.  It is relatively easy to see that any integer $n\in (2^m,2^{m+1})$ can be generated through this process. The number of steps in Parts 2 and 3 of the process is at most
$$
2^{k}+m+\frac{m}{k},
$$
which shows that for $n<2^{m+1}$,
\begin{equation}\label{brauer}
\ell(n)\le \min_{1\le k \le m} \left\lfloor2^{k}+m+\frac{m}{k}\right\rfloor.
\end{equation}
To obtain an upper bound, it suffices to optimize the choice of $k$. Observe that equation (\ref{brauer}) is morally the result stated by Brauer \cite{brauer} and recalled in the Introduction.

\subsection{A constructive lower bound for \texorpdfstring{$F(m,r)$}{F(m,r)}}

We now provide a constructive proof of the lower bound in
Theorem \ref{t2}. Throughout this subsection, fix $0<c<\log 2$
and assume that
$$
m^{0.9}<r\le\frac{cm}{\log m}.
$$
All asymptotic estimates below hold uniformly over the integers
$r$ in this range as $m$ tends to infinity, with $c$ fixed. The threshold for $m$ may depend on $c$.

This proof will be divided into two distinct stages. In the first stage (parts 1--4), we describe an algorithm to construct addition chains, while in the second stage (parts 5--8) we prove a lower bound for the number of distinct integers that can be obtained by our construction. More precisely:

\begin{enumerate}
\item We give a sequence of choices that produces many addition chains of
length at most $m+r$.  
Our construction divides an addition chain into {\it blocks}. Each block will typically contain doubling steps and two additive steps.
The total number of ways of constructing all the blocks will be at least
$
 m^re^{-O_c(r)}.
$

\item A difficulty will arise from the fact that different constructions (different sequences of choices) may lead to the same final integer.
However, using graphs related to the addition chains, we will restrict ourselves to a subset of {\it good} constructions.  We will then show that most sequences of choices lead to good constructions and that no final integer is produced by more than $e^{o(r)}$ good constructions. Together with our bound on the number of constructions, this will prove the result.

\end{enumerate}

\vskip 10pt

\noindent
{\bf Part 1. The setup}

Put
$$
\rho_0=\frac{\log 2}{c}>1,
\qquad
\rho=\frac{m}{r\log_2r}.
$$
Since $r\le cm/\log m$ and $r<m$ for sufficiently large $m$,
we have $\rho\ge\rho_0$. Moreover,
\begin{equation}
\frac mr=\rho\log_2r.
\label{eq:F-new-m-over-r}
\end{equation}
We use the two parameters
\begin{equation}
L=\left\lfloor\frac{\rho-1}{16}\log_2r\right\rfloor,
\qquad
K=\left\lceil\frac{\rho+3}{2}\log_2r\right\rceil.
\label{eq:F-new-parameters}
\end{equation}
Thus $L,K\asymp_c m/r$, and $K>4L$ for all sufficiently
large $m$. We will also use $\log r\asymp\log m$.

At any point, the chain constructed so far has the form
$$
 1=a_0<a_1<\cdots<a_s=X,
$$
where $X$ is its current last term.  We keep a subset $\mathcal H$ of
earlier odd terms among $a_0,\ldots,a_s$, which we call a 
\emph{reservoir}. A \emph{phase} is a consecutive collection of blocks that all use the same fixed
reservoir, called the \emph{input reservoir} of the phase. Outputs created
during a phase are not used as sources until the phase has ended. In each block, we use two reservoir elements:
\begin{itemize}
\item $H_p$, called the \emph{principal source};
\item $H_q$, called the \emph{secondary source}.
\end{itemize}
These names refer to the role of the reservoir elements in the current block. Thus the same reservoir element may be principal in
one block and secondary in another.  In block $i$, the two sources are
denoted by $H_{p_i}$ and $H_{q_i}$.

Suppose a phase has been assigned a nonnegative integer $R$, called its
\emph{separator}, and its input reservoir is
$$
 \mathcal H=\{H_1,\ldots,H_N\}.
$$
The separator is simply a long run of doublings at the first block. Later it
will let us recover the value at the beginning of the phase from the value at
its end.
In block $i$, choose a principal source $H_{p_i}$, a secondary source
$H_{q_i}$, and integers
$$
 e_i\in\{L,L+1,\ldots,2L-1\},
 \qquad
 u_i\in\{0,1,\ldots,L-1\}.
$$
Starting from the current last term $X_{i-1}$, we will perform four
operations to get the next term
\begin{equation}
 X_i=
 2^{K+u_i+R\mathbf 1_{\{i=1\}}+1}X_{i-1}
 +2^{e_i}H_{q_i}+H_{p_i},
 \label{eq:F-new-block}
\end{equation}
which we call the
\emph{output} of that block.
Every block therefore has exactly two nondoubling steps, and in particular
\begin{equation}
 X_i\equiv H_{p_i}+2^{e_i}H_{q_i}\pmod {2^K}.
 \label{eq:F-new-low-bits}
\end{equation}
Here and below, the last $K$ binary digits of an integer mean its remainder
modulo $2^K$.
Thus, the last $K$ binary digits of $X_i$ do not depend on $X_{i-1}$ or on
$u_i$.

If no restrictions were imposed, a block based on a reservoir of size $N$ would have $N^2L^2$ choices for $(p,q,e,u)$ from which we could deduce the theorem. The construction below imposes some mild restrictions on these choices, but only loses a constant or subexponential factor.

\vskip 10pt

\noindent
{\bf Part 2. The initial reservoir}

We next construct the initial reservoir. Let
$$
N_0=\left\lfloor\frac{r}{K(\log_2r)^2}\right\rfloor,
$$
and choose $N_0$ arbitrary odd integers
$$
 1\leq U_s<2^K,
 \qquad 1\leq s\leq N_0.
$$
Starting with $Z_0=1$, append terms so that
\begin{equation}
 Z_s=2^{K+2L}Z_{s-1}+U_s,
 \qquad 1\leq s\leq N_0.
 \label{eq:F-new-seeds}
\end{equation}
To do this, first double $Z_{s-1}$ exactly $2L$ times.  
Then read the $K$ binary
digits of $U_s$: at each digit, double the current term and, when the digit is
$1$, add the earlier term $1$.  This uses $K+2L$ doublings and at most $K$
nondoubling steps.

For example, take $L=1$, $K=6$, and $U_1=3$.  Since the six-bit expansion of
$3$ is $000011$, the construction begins
$$
 1,2,4,8,16,32,64,128,\mathbf{129},258,\mathbf{259}=Z_1.
$$
The bold terms are the two nondoubling steps, and indeed
$Z_1=2^8+3=259\equiv3\pmod {64}$.

The \emph{initial reservoir} is
$$
 \mathcal H_0=\{Z_1,\ldots,Z_{N_0}\}.
$$
Its elements, called the \emph{seeds}, are odd and satisfy
\begin{equation}
 Z_s\equiv U_s\pmod {2^K}.
 \label{eq:F-new-seed-residues}
\end{equation}
The number of nondoubling steps used in constructing the seeds is at most
\begin{equation}
s_0:=N_0K\le\frac{r}{(\log_2r)^2}
=O\left(\frac{r}{(\log r)^2}\right).
\label{eq:F-new-seed-cost}
\end{equation}

All the shifted (multiplied by powers of two) secondary sources required later are genuine earlier chain
terms.  For $Z_s$ with $s<N_0$, the necessary shifts are created at the start
of the construction of $Z_{s+1}$.  The binary shifts of $Z_{N_0}$ are created by the
initial doublings of the first block.  Likewise, every output of a growth
phase except its last is doubled at least $K$ times at the start of the next
block, while the last output is doubled at the start of the next phase.  Since
$e_i-1<2L<K$, this supplies every term $2^{e_i-1}H_{q_i}$ used in a block.

\vskip 10pt

\noindent
{\bf Part 3. Growing the reservoir and choosing the records}

Suppose $\mathcal H_j$ has size $N_j$.  A \emph{growth phase} based on
$\mathcal H_j$ consists of $4N_j$ blocks.  Every element of $\mathcal H_j$ is
used exactly four times as principal source, in an arbitrary order, and the
odd outputs form
$$
 \mathcal H_{j+1}=\{X_1,\ldots,X_{4N_j}\},
 \qquad
 N_{j+1}=4N_j.
$$

Let $\alpha=\alpha(c)>0$ be a sufficiently small constant, to be chosen in
the length calculation, and assume $\alpha<1/8$.  Let $J$ be the largest integer such
that
$
 N_J=4^JN_0\leq\alpha r.
$
Then
\begin{equation}
 \frac{\alpha r}{4}<N_J\leq\alpha r,
 \qquad
 \sum_{j<J}4N_j\leq\frac43N_J.
 \label{eq:F-new-reservoir-sums}
\end{equation}
We use
\begin{equation}
 M_{\rm all}=\left\lfloor\frac{r-s_0}{2}\right\rfloor
 \label{eq:F-new-total-blocks}
\end{equation}
blocks in total.  After the growth phases, the remaining
$$
 M_*=M_{\rm all}-\sum_{j<J}4N_j
$$
blocks form a terminal phase based on $\mathcal H_J$.  For all sufficiently
large $r$, we have $M_*>r/4$.  In the terminal phase, every element of
$\mathcal H_J$ is used as principal source either
$\lfloor M_*/N_J\rfloor$ or $\lceil M_*/N_J\rceil$ times.  These
multiplicities are bounded in terms of $c$.

We now impose one simple rule on the secondary sources.  Give seed $Z_s$ the
root $s$, and let every output inherit the root of its principal source.  When
a block has principal root $a$ and secondary root $b$, draw a directed arrow
$$
 a\longrightarrow b
$$
labelled by the exponent $e$ used in that block.  Arrows persist throughout
the construction.  We require $a\neq b$ and forbid reuse of the same ordered
pair $a\to b$.  The reverse arrow $b\to a$ is allowed.

For example, suppose that the initial seeds have roots $1,2,3,4$.  If a
block uses principal root $1$ and secondary root $2$, it creates the arrow
$$
 1\xrightarrow{e}2,
$$
where $e$ is the exponent chosen in that block.  The output of the block
inherits root $1$.  Later, when the principal root is again $1$, root $2$
cannot be used as the secondary root, because that would reuse the arrow
$1\to2$.  Root $1$ is also forbidden, because loops are not allowed, but
roots $3$ and $4$ remain available.
The direction of the arrow matters:  a later block with principal root $2$
may still use secondary root $1$, thereby creating the different arrow
$$
 2\xrightarrow{e'}1.
$$
Thus the graph records ordered principal--secondary interactions: an arrow
leaving a root prevents only that same ordered interaction from being used
again.

For now, this rule guarantees many choices for the secondary source.  Later,
the arrows will serve as markers that distinguish different source histories.

Every reservoir $\mathcal H_j$ contains exactly $N_j/N_0$ elements of each
root.  Indeed, this is true of $\mathcal H_0$, and every old element is used
four times as principal source while each output inherits its principal
root.  Moreover, any fixed root is used as a principal root at most
$$
 \sum_{j<J}\frac{4N_j}{N_0}
 +\left\lceil\frac{M_*}{N_J}\right\rceil\frac{N_J}{N_0}
 =O_c(r/N_0)
$$
times in the whole construction.  Since $K=O_c(m/r)$ and $r>m^{0.9}$, the quantity
$r/(K(\log_2r)^2)$ tends to infinity uniformly. Consequently,
$$
\frac{r}{N_0^2}
=O_c\left(\frac{K^2(\log r)^4}{r}\right)
=O_c\left(\frac{m^2(\log m)^4}{r^3}\right)
=o(1).
$$
Thus $r/N_0=o(N_0)$. At a block whose principal root is $a$,
only $a$ and the $o(N_0)$ heads of arrows already leaving $a$
are forbidden as secondary roots. A phase based on a reservoir of size $N$
therefore has at least
\begin{equation}
 \frac N2
 \label{eq:F-new-secondary-count}
\end{equation}
choices for the secondary source in every block, for all sufficiently large
$r$.

It remains to restrict the extra-doubling choices so that their sum in each
phase is known.  In a phase of $M$ blocks, the $L^M$ possible sequences
$(u_1,\ldots,u_M)$ have only $M(L-1)+1\leq ML$ possible sums.  By the
pigeonhole principle, some sum $t$ is attained by at least $L^M/(ML)$
sequences.  Complementing every coordinate by $u_i\mapsto L-1-u_i$ shows
that the complementary sum $M(L-1)-t$ is attained equally often.  We fix the
smaller of these two sums.  Doing this separately in every phase gives
\begin{equation}
 \sum_{i=1}^{M_{\rm all}}u_i
 \leq\frac{M_{\rm all}(L-1)}2.
 \label{eq:F-new-u-budget}
\end{equation}
The number of phases is now $O_c(\log m)$. Indeed,
$J+1=O_c(\log(K(\log_2r)^2))$ by the choice of $N_0$
and the geometric growth of the reservoirs.
Since $rL\le m/16$, the product of the losses $ML$ is at most
$$
(rL)^{O_c(\log m)}
=\exp\big(O_c((\log m)^2)\big)
=e^{o(r)}.
$$
Thus the sum in every phase is fixed before any record is chosen, while the
total number of allowed $u$-sequences remains
$L^{M_{\rm all}}e^{-o(r)}$.

Finally, assign a deterministic separator to every phase.  For a growth phase
based on $\mathcal H_j$, and for the terminal phase based on $\mathcal H_J$,
put
\begin{equation}
 R_j=2(K+L+1)N_j.
 \label{eq:F-new-separators}
\end{equation}
This is the value of $R$ used in the first block of that phase.

We call a \emph{choice record} the order of the principal sources and all
the choices of $q_i,e_i,u_i$ in every phase.  Note that all the restrictions defining a
choice record depend only on the roots and on the preceding formal choices,
not on the numerical values of the $U_s$. The set of choice records is therefore the same for every allowed choice of the $U_s$. Counting the possible number of choice records will be the next step in the proof of the theorem.

\vskip 10pt

\noindent
{\bf Part 4. Counting records and controlling the length}

We first count the choice records, without yet asking whether two of them have the same endpoint.  
Consider a growth phase based on a reservoir of size $N$, with $M=4N$
blocks.  The number of possible orders of its principal sources is
$$
 \frac{(4N)!}{(4!)^N}\geq(c_0N)^M
$$
for an absolute constant $c_0>0$.  At every block there are at least $N/2$
secondary choices by \eqref{eq:F-new-secondary-count} and $L$ choices for
$e_i$.  The same principal-order estimate, with a constant depending on $c$,
holds in the terminal phase because its principal multiplicities are bounded.

For $0\leq j<J$, put
$$
 M_j=4N_j,
$$
and put $M_J=M_*$ for the terminal phase.  Thus $M_j$ is the number of
blocks in the phase whose input reservoir is $\mathcal H_j$, and
$$
 \sum_{j=0}^J M_j=M_{\rm all}.
$$
Combining the principal-source orders, the choices of secondary sources and
exponents, and the allowed extra-doubling sequences, we obtain, for the
total number $T$ of choice records,
$$
 \log T
 \geq
 2\sum_{j=0}^J M_j\log(N_jL)-O_c(r).
$$
Replacing every $N_j$ by $N_J$ costs only $O(N_J)$ in the logarithm, because
$$
 \sum_{j<J}4N_j\log\frac{N_J}{N_j}=O(N_J)=O_c(r).
$$
It follows that
$$
 \log T\geq2M_{\rm all}\log(N_JL)-O_c(r).
$$
For all sufficiently large $m$, the parameter choices give
$$
L\ge\frac{(\rho-1)\log_2r}{32}
\ge\frac{1-\rho_0^{-1}}{32}\frac mr.
$$
Together with \eqref{eq:F-new-reservoir-sums}, this implies
$$
N_JL\ge\frac{\alpha(1-\rho_0^{-1})}{128}m.
$$ Moreover,
$$
 (r-2M_{\rm all})\log m
 \leq(s_0+2)\log m=o(r)
$$
by \eqref{eq:F-new-seed-cost} and \eqref{eq:F-new-total-blocks}.
Consequently,
\begin{equation}
 T\geq m^re^{-O_c(r)}.
 \label{eq:F-new-T}
\end{equation}

We next verify the deterministic size bounds used in defining the separators.
The seed recurrence \eqref{eq:F-new-seeds} gives
$$
 Z_{N_0}<2^{N_0(K+2L+1)}<2^{R_0}.
$$
Suppose a growth phase based on $\mathcal H_j$ begins with an integer smaller than $2^{R_j}$.
Every source is then also below $2^{R_j}$.  During the phase, the separator
contributes $R_j$ doublings.  Since the sources are already present when the
phase begins, $H_{p_i},H_{q_i}\leq X_{i-1}$.  Consequently,
$$
 2^{e_i}H_{q_i}+H_{p_i}
 \leq(2^{2L-1}+1)X_{i-1}
 <2^{K+u_i+R_j\mathbf 1_{\{i=1\}}+1}X_{i-1},
$$
where we used $K>2L+1$.  By \eqref{eq:F-new-block}, block $i$ therefore
increases the current endpoint by a factor smaller than $2^{K+L+1}$, apart
from the separator $R_j$ in its first block.  The phase therefore ends below
$$
 2^{2R_j+4N_j(K+L+1)}.
$$
By \eqref{eq:F-new-separators},
$$
 2R_j+4N_j(K+L+1)=4R_j=R_{j+1}.
$$
Induction therefore proves that every growth phase begins with a smaller integer than its prescribed
bound $2^{R_j}$, and the terminal phase begins below $2^{R_J}$.  In addition,
\begin{equation}
 \sum_{\text{phases}}R_j
 \leq\frac83(K+L+1)N_J
 \leq\frac83\alpha(K+L+1)r.
 \label{eq:F-new-separator-total}
\end{equation}

Let $D_0$ be the total number of doubling steps used up to
the final odd endpoint $X$. Since $2L\le K/4$, the
construction of the seeds uses
$$
N_0(K+2L)\le\frac{5r}{4(\log_2r)^2}=o(r)
$$
doubling steps. The blocks contribute $M_{\rm all}K$
compulsory doubling steps, one further doubling step per
block, the extra doubling steps $u_i$, and the separators.
Therefore, \eqref{eq:F-new-u-budget} and
\eqref{eq:F-new-separator-total} give
$$
\frac{D_0}{r}
\le\frac K2+\frac L4
 +\frac83\alpha(K+L+1)+O(1).
$$
From \eqref{eq:F-new-parameters},
$$
\frac K2+\frac L4
\le\left(1+\frac{17}{64}(\rho-1)\right)\log_2r+O(1).
$$
The difference between $\rho$ and the coefficient on the
right is $47(\rho-1)/64>0$. Choose $0<\alpha<1/8$,
depending only on $c$, so small that
$$
\frac83\alpha
\left(\frac{2}{\rho_0-1}+\frac9{16}\right)
<\frac{47}{128}.
$$

Let $A_1$ be the total number of nondoubling steps used to
obtain $X$. By \eqref{eq:F-new-total-blocks},
$$
A_1\le s_0+2M_{\rm all}\le r.
$$
Put
$$
\tau=\frac{47}{128}(\rho-1).
$$
Since $\rho\ge\rho_0$, our choice of $\alpha$ gives
$$
\frac83\alpha(K+L+1)\le\tau\log_2r+O(1).
$$
Combining these estimates, we obtain
$$
\frac{D_0}{r}\le(\rho-\tau)\log_2r+O(1).
$$
Using \eqref{eq:F-new-m-over-r}, $A_1/r\le1$, and
$$
\tau\ge\frac{47}{128}(\rho_0-1)>0,
$$
we conclude that
$$
D_0+A_1\le m
$$
for all sufficiently large $m$, uniformly in the stated range.

Put $S=\lfloor\log_2X\rfloor$.  Doubling steps alone give $D_0\leq S$, while
every nondoubling step increases the current last term by a factor smaller
than $2$.  Hence
$$
 D_0\leq S\leq D_0+A_{1}\leq m.
$$
Append $m-S$ doubling steps and define
\begin{equation}
 n=2^{m-S}X.
 \label{eq:F-new-normalization}
\end{equation}
Then $2^m\leq n<2^{m+1}$, and the completed chain has length at most
$$
 D_0+A_{1}+m-S\leq m+A_{1}\leq m+r.
$$
The map $X\mapsto n$ is injective because $X$ is the odd part of $n$.

We have now proved that, for every fixed choice of the seed residues, the same
set of $T\geq m^re^{-O_c(r)}$ choice records produces addition chains of length at most $m+r$. This concludes the first stage of the proof. It
remains to choose the seed residues so that not too many records have the same
odd endpoint.

\vskip 10pt

\noindent
{\bf Part 5. Recovering phases and individual blocks}

For a fixed seed-residue vector $\mathbf U$, by \emph{decoding} a final odd
endpoint we mean reconstructing all choice records that could have produced
that endpoint.  The \emph{decoder} is the backward reconstruction procedure
used for this purpose.  It need not produce a unique record: whenever several
source triples are compatible with the information already recovered, the
decoder retains all of them.

Consider a phase with $M$ blocks, separator $R$, and fixed sum
$$
 t=\sum_{i=1}^M u_i.
$$
Write
$$
 g_i=K+u_i+R\mathbf 1_{\{i=1\}}+1,
 \qquad
 Q_i=2^{e_i}H_{q_i}+H_{p_i}.
$$
If $X$ and $Y$ are the values at the beginning and end of the phase, then
iteration of \eqref{eq:F-new-block} gives
$$
 Y=2^GX+E,
 \qquad
 G=M(K+1)+R+t,
$$
where
$$
 E=\sum_{i=1}^M2^{g_{i+1}+\cdots+g_M}Q_i.
$$
The integer $G$ is known from the fixed list of phase sums.  Every source in
the phase is smaller than $2^R$, while $e_i<2L$, and hence
$$
 Q_i<2^{R+2L+1}.
$$
Since $g_1\geq K+R+1$ and $g_i\geq K+1$ for $i>1$, a geometric series
estimate gives
$$
 \frac E{2^G}
 <
 \frac{2^{R+2L+1}}{2^{K+R+1}}
 \frac1{1-2^{-(K+1)}}
 <2^{2L+1-K}<1.
$$
Therefore
\begin{equation}
 X=\left\lfloor\frac Y{2^G}\right\rfloor.
 \label{eq:F-new-phase-recovery}
\end{equation}
Starting from the final odd endpoint and applying
\eqref{eq:F-new-phase-recovery} to the phases in reverse order recovers the
values at both ends of every phase.

Now suppose the input reservoir and the two endpoints of a phase are known.
Given the current value $X_i$ and a candidate triple $(p,q,e)$, put
\begin{equation}
 W=X_i-H_p-2^eH_q.
 \label{eq:F-new-reverse-block}
\end{equation}
For the correct candidate,
$$
 W=2^{K+u_i+R\mathbf 1_{\{i=1\}}+1}X_{i-1}.
$$
Since $X_{i-1}$ is odd, $\nu_2(W)$ determines $u_i$, and division by the
corresponding power of $2$ determines $X_{i-1}$.  More explicitly, when
$W>0$ a candidate forces
$$
 u_i=\nu_2(W)-K-R\mathbf 1_{\{i=1\}}-1,
 \qquad
 X_{i-1}=\frac{W}{2^{\nu_2(W)}}.
$$
We discard the candidate if $W\leq0$, if the displayed value of $u_i$ does
not lie in $\{0,\ldots,L-1\}$, or if any other condition of a choice record
fails.  Thus a surviving candidate triple determines the whole preceding
block.  By \eqref{eq:F-new-low-bits}, every remaining candidate must satisfy
$$
 X_i\equiv H_p+2^eH_q\pmod {2^K}.
$$
Branching is possible only when two distinct source triples give the same
remainder modulo $2^K$.

\vskip 10pt

\noindent
{\bf Part 6. A decoding strategy for source triples}

Recall that roots are permanent labels inherited from the initial seeds.
All reservoir elements having root $s$ may be viewed as descendants of seed
$Z_s$, or as belonging to the same \emph{seed family}.  If a block has
principal root $r$, secondary root $s$, and exponent $e$, it creates the
directed marker
$$
 r\xrightarrow{e}s.
$$
At the level of roots, this arrow records that an element of seed family $s$
was used as a secondary source in forming an output of seed family $r$, with
multiplier $2^e$.  The output remains in family $r$, since it inherits the
root of its principal source.  The direction is essential: the marker
$r\to s$ is different from $s\to r$.

To make this precise, attach to every reservoir element $H_v$ a vector
$\mathbf a_v\in\mathbb Z^{N_0}$.  This vector records only the dependence of the last $K$ binary digits of $H_v$ on the residues $U_s$. 
For a seed and a block output, respectively,
define
\begin{equation}
 \mathbf a_{Z_s}=\mathbf e_s,
 \qquad
 \mathbf a_v=\mathbf a_p+2^e\mathbf a_q,
 \label{eq:F-new-formal-vectors}
\end{equation}
where $\mathbf e_s$ is the $s$-th standard basis vector and $p,q,e$ are the
sources and exponent used to form $v$.  It follows from
\eqref{eq:F-new-seed-residues} and \eqref{eq:F-new-low-bits} that
\begin{equation}
 H_v\equiv\sum_{s=1}^{N_0}a_v(s)U_s\pmod {2^K}.
 \label{eq:F-new-formal-evaluation}
\end{equation}

Modulo $2^{2L}$, the coefficient vector of an element has a particularly
simple description.  Starting from $v$, repeatedly replace the current
element by the principal source used to create it, until an initial seed is
reached.  Let $\mathcal P(v)$ be the collection of blocks encountered in this
way.  Since roots are inherited from principal sources, all these blocks have
the same principal root
$$
 r=\operatorname{root}(v).
$$
If a block $x\in\mathcal P(v)$ used a secondary source of root $s_x$ and
exponent $e_x$, then it created the directed arrow $r\to s_x$.  We claim that
\begin{equation}
 \mathbf a_v
 \equiv
 \mathbf e_r+
 \sum_{x\in\mathcal P(v)}2^{e_x}\mathbf e_{s_x}
 \pmod {2^{2L}}.
 \label{eq:F-new-ancestry-expansion}
\end{equation}

To see this, first observe from the recursion
\eqref{eq:F-new-formal-vectors} that
$$
 \mathbf a_w\equiv
 \mathbf e_{\operatorname{root}(w)}
 \pmod {2^L}
$$
for every reservoir element $w$.  Indeed, this is true for the seeds, and
the secondary term $2^e\mathbf a_q$ is divisible by $2^L$ because
$e\geq L$.  Consequently, if block $x$ has secondary source $q_x$, then
$$
 2^{e_x}\mathbf a_{q_x}
 \equiv
 2^{e_x}\mathbf e_{s_x}
 \pmod {2^{2L}}.
$$
Thus, modulo $2^{2L}$, a secondary source contributes only the basis vector
of its root.  Every contribution coming from inside the history of that
secondary source contains a second factor of size at least $2^L$ and
therefore vanishes modulo $2^{2L}$.  Iterating along the principal sources
proves \eqref{eq:F-new-ancestry-expansion}.

For a small example, take $K=6$, $L=1$, and seed residues
$$
 (U_1,U_2,U_3)=(3,5,7)\pmod {64}.
$$
A block with principal source $Z_1$ and secondary source $Z_2$ creates the arrow
$1\to2$ and has coefficient vector
$$
 \mathbf e_1+2\mathbf e_2=(1,2,0),
$$
so its last six binary digits equal $3+2\cdot5=13$.  A block with principal
source $Z_1$ and secondary source $Z_3$ creates $1\to3$, has vector $(1,0,2)$, and
has last six binary digits $3+2\cdot7=17$.  The arrow $1\to2$ may not be
used again, although the reverse arrow $2\to1$ remains allowed.

We now record the separation property needed by the decoder.  At the
beginning of a phase, call $(p,q,e)$ \emph{admissible} if $H_p,H_q$ belong to
the input reservoir, their roots are distinct, the arrow
$\operatorname{root}(H_p)\to\operatorname{root}(H_q)$ has not yet been used, and
$L\leq e<2L$.  Set
$$
 \mathbf b(p,q,e)=\mathbf a_p+2^e\mathbf a_q.
$$

\begin{lemma}
\label{lem:F-new-marker-separation}
If $(p,q,e)\neq(p',q',e')$ are two admissible triples at the beginning of the
same phase, then some coordinate $s$ satisfies
\begin{equation}
 \nu_2\bigl(b_s(p,q,e)-b_s(p',q',e')\bigr)<4L.
 \label{eq:F-new-triple-separation}
\end{equation}
Here $\nu_2(x)$ is the $2$-adic valuation of $x$, with
$\nu_2(0)=+\infty$.
\end{lemma}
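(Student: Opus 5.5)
The plan is to compare the two vectors $\mathbf b(p,q,e)$ and $\mathbf b(p',q',e')$ coordinate by coordinate, working modulo $2^{2L}$ via the ancestry expansion \eqref{eq:F-new-ancestry-expansion}, and to go up to modulus $2^{4L}$ only in the single case where the two triples agree modulo $2^{2L}$. The structural input throughout is the no-reuse rule for arrows: every block ever performed creates a directed arrow that is globally unique.

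\textbf{Step 1: the vector modulo $2^{2L}$ determines the history.} Write $r=\operatorname{root}(H_p)$ and $s=\operatorname{root}(H_q)$. Since $\mathbf a_q\equiv\mathbf e_s\pmod{2^L}$ and $e\ge L$, \eqref{eq:F-new-ancestry-expansion} gives
$$\mathbf b(p,q,e)\equiv\mathbf e_r+\sum_{x\in\mathcal P(p)}2^{e_x}\mathbf e_{s_x}+2^e\mathbf e_s\pmod{2^{2L}}.$$
All arrows $r\to s_x$ for $x\in\mathcal P(p)$ are distinct, and they differ from $r\to s$, which admissibility says is unused. Also $r\neq s_x$ and $r\neq s$. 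Hence, modulo $2^{2L}$:
\begin{itemize}
\item coordinate $r$ equals $1$;
\item each coordinate $s_x$ equals $2^{e_x}$, with $L\le e_x<2L$;
\item coordinate $s$ equals $2^e$;
\item every other coordinate is $0$.
\end{itemize}

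\textbf{Step 2: an auxiliary claim.} I would first show that two distinct elements $v\neq v'$ of the same input reservoir satisfy $\nu_2\bigl(a_v(s^*)-a_{v'}(s^*)\bigr)<2L$ for some coordinate $s^*$.

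If their roots differ, coordinate $\operatorname{root}(v)$ is odd in $\mathbf a_v$ and even in $\mathbf a_{v'}$, so the difference has valuation $0$.

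If the roots agree, say both equal $r$, then at least one of them, say $v$, is a block output. Its creating block $x_0$ lies in the previous phase and is different from every block in $\mathcal P(v')$. By global uniqueness, the arrow $r\to s_{x_0}$ does not occur in $\mathcal P(v')$, so coordinate $s_{x_0}$ is $2^{e_{x_0}}$ in $\mathbf a_v$ and $0$ in $\mathbf a_{v'}$ modulo $2^{2L}$. (If $v'$ were a seed, $\mathcal P(v')$ would be empty and the same argument applies.) The valuation of the difference is therefore $e_{x_0}<2L$.

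\textbf{Step 3: case analysis for the lemma.}
\begin{itemize}
\item[(i)] If $\operatorname{root}(H_p)\neq\operatorname{root}(H_{p'})$, coordinate $r$ is $\equiv1$ in $\mathbf b$ and even in $\mathbf b'$ by Step 1, so the valuation is $0$.
\item[(ii)] If the roots agree but $p\neq p'$, Step 2 gives an arrow $r\to s^*$ in $\mathcal P(p)$ but not in $\mathcal P(p')$, or the reverse. Since $r\to s$ and $r\to s'$ are unused, $s^*\notin\{s,s'\}$. Step 1 then shows coordinate $s^*$ differs by $2^{e_{x}}$ modulo $2^{2L}$, giving valuation $<2L$.
\item[(iii)] If $p=p'$ and $s\neq s'$, coordinate $s$ is $2^e$ versus $0$ modulo $2^{2L}$; $s$ is not an arrow head in $\mathcal P(p)$, so it receives no other contribution.
\item[(iv)] If $p=p'$, $s=s'$ and $e\neq e'$, coordinate $s$ differs by $2^e-2^{e'}$, whose valuation is $\min(e,e')<2L$.
\item[(v)] If $p=p'$, $e=e'$ and $q\neq q'$, the difference is $2^e(\mathbf a_q-\mathbf a_{q'})$. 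By Step 2 some coordinate has valuation $<e+2L<4L$.
\end{itemize}

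The main obstacle is case (v), which is the only place the bound $4L$ rather than $2L$ is needed. It hinges on Step 2, which in turn relies on the fact that distinct same-root reservoir elements have distinct last principal blocks, and therefore carry distinct globally unique arrows. Getting this right requires some care with the phase structure: outputs are not used within their own phase, and the two elements lie in the same reservoir.
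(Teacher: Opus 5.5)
Your proof is correct and follows essentially the same route as the paper. Both rest on the auxiliary claim that two distinct elements of one reservoir are separated below valuation $2L$ by the arrow created at the birth of one of them, followed by the same five-case analysis on the triples, with the bound $4L$ needed only when $p=p'$, $e=e'$ and $q\neq q'$. Your Step 1, which writes out $\mathbf b(p,q,e)$ modulo $2^{2L}$ coordinate by coordinate using the no-reuse rule and admissibility, just packages the local valuation checks that the paper does case by case.
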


\begin{proof}
We first note that if $v\neq w$ belong to the same reservoir and have the same
root $r$, then some coordinate distinguishes $\mathbf a_v$ and $\mathbf a_w$
at valuation less than $2L$.  This situation does not arise in
$\mathcal H_0$, which has one seed of every root.  In a later reservoir, let
$r\to s$ be the arrow created when $v$ was born.  The elements $v$ and $w$
were produced in the same preceding phase from old sources.  Hence the arrow
$r\to s$ occurs neither in the principal ancestry of $w$ nor as the arrow
created when $w$ was born: the former ancestry predates that phase, and the
latter arrow is different by the rule forbidding reuse of an ordered pair.
At coordinate $s$,
\eqref{eq:F-new-ancestry-expansion} therefore gives
$$
 a_v(s)-a_w(s)\equiv2^{e_v}\pmod {2^{2L}},
$$
where $L\leq e_v<2L$.  Thus
$$
 \nu_2\bigl(a_v(s)-a_w(s)\bigr)=e_v<2L.
$$
Once created, the arrow $r\to s$ remains used at the beginning of every later
phase.

We now compare the two triples.  If their principal roots differ, their
coefficient vectors already differ modulo $2$.  Suppose their principal roots
agree but $p\neq p'$.  By the preceding paragraph, some previously used arrow
$r\to s$ gives a distinguishing coordinate of valuation less than $2L$.
Admissibility forbids both secondary roots from being $s$.  At coordinate $s$,
$\mathbf a_q$ and $\mathbf a_{q'}$ are therefore divisible by $2^L$, and after
multiplication by $2^e$ and $2^{e'}$, the two new contributions are divisible
by $2^{2L}$ and cannot cancel the old distinction.

It remains to consider $p=p'$.  Suppose first that the secondary roots differ,
and let $s=\operatorname{root}(H_q)$.  At coordinate $s$, the contribution
$2^e\mathbf a_q$ has valuation $e$, whereas
$2^{e'}\mathbf a_{q'}$ is divisible by $2^{e'+L}$.  Since
$e'+L\geq2L>e$, this coordinate distinguishes the two vectors at valuation
$e<2L$.  If the secondary roots agree but $e\neq e'$, the difference at their common root coordinate has valuation $\min(e,e')<2L$.  Finally, if the secondary roots and exponents agree
but $q\neq q'$, the preceding same-root observation, followed by multiplication
by $2^e$, gives a distinguishing coordinate of valuation at most
$$
 (2L-1)+(2L-1)=4L-2.
$$
These cases prove \eqref{eq:F-new-triple-separation}.
\end{proof}

\vskip 10pt

\noindent
{\bf Part 7. Choosing the seeds at random}
 
Let $\mathcal D$ be the set of the $T$ choice records counted above, and let
$$
 \mathcal U=
 \left\{(U_1,\ldots,U_{N_0}):
 1\leq U_s<2^K\text{ and every }U_s\text{ is odd}\right\}.
$$
We use randomness to select one suitable vector in $\mathcal U$.
Equip $\mathcal U$ with the uniform measure, that is, the $U_s$ are
independent and uniform among the $2^{K-1}$ odd residue classes modulo $2^K$.
First fix a record $d\in\mathcal D$.  Its arrows, coefficient vectors, and
candidate triples are then deterministic, and probability refers only to the
choice of vector $\mathbf U\in\mathcal U$.

At the beginning of a phase based on a reservoir of size $N$, let
$\mathcal W$ be the set of all triples admissible for the arrow set at that
time.  We keep this set fixed while decoding the phase, even though some of
its triples cease to be allowed after new arrows are added.  This only enlarges
the candidate set, and
$$
 |\mathcal W|\leq N^2L\leq r^2L.
$$

Consider two distinct candidates.  By Lemma~\ref{lem:F-new-marker-separation}
and \eqref{eq:F-new-formal-evaluation},
some coordinate of the difference of their coefficient vectors is $2^v$
times an odd integer, with $v<4L<K$.  Fix every seed residue except the residue
$U_s$ in that coordinate.  Equality of the two numerical remainders modulo
$2^K$ imposes one linear congruence on $U_s$, with at most $2^v$ solutions
modulo $2^K$.  Therefore the probability of this equality is at most
$$
 \frac{2^v}{2^{K-1}}\leq2^{4L-K}.
$$

For the fixed record $d$, call a block \emph{ambiguous} for $\mathbf U$ if
its chosen triple has the same remainder modulo $2^K$ as another triple in the
corresponding set $\mathcal W$.  The union bound shows that a fixed block is
ambiguous with probability at most
\begin{equation}
\begin{aligned}
\delta_r:=r^2L\,2^{4L-K}
&\le Lr^{-(\rho-1)/4}\\
&=O_c\left((\log r)r^{-(\rho_0-1)/4}\right)=o(1).
\end{aligned}
\label{eq:F-new-delta}
\end{equation}
Here we used \eqref{eq:F-new-parameters}. To obtain the
uniform estimate, observe that $$L\le(\rho-1)\log_2r/16$$
and that the function $x\mapsto x2^{-x/4}$ is decreasing
for all sufficiently large $x$. Since $\rho\ge\rho_0>1$,
this gives
$$
Lr^{-(\rho-1)/4}
\le\frac{(\rho_0-1)\log_2r}{16}
 r^{-(\rho_0-1)/4}
$$
for all sufficiently large $m$.  Let
$A_{\rm amb}(d,\mathbf U)$ be the number of ambiguous blocks.  Since there
are at most $r$ blocks,
$$
 \mathbb E_{\mathbf U}A_{\rm amb}(d,\mathbf U)\leq r\delta_r.
$$
Set $\varepsilon_r=\sqrt{\delta_r}$.  Markov's inequality gives, uniformly in
$d$,
$$
 \mathbb P_{\mathbf U}
 \left(A_{\rm amb}(d,\mathbf U)>\varepsilon_rr\right)
 \leq\varepsilon_r.
$$
Moreover, $\log(r^2L)=O(\log r)$ uniformly, so
\eqref{eq:F-new-delta} gives
\begin{equation}
\varepsilon_r\log(r^2L)
=O_c\left(r^{-(\rho_0-1)/8}(\log r)^{3/2}\right)=o(1).
\label{eq:F-new-epsilon}
\end{equation}
Averaging the Markov estimate over $d\in\mathcal D$ yields
$$
 \mathbb E_{\mathbf U}
 \left|\{d\in\mathcal D:
 A_{\rm amb}(d,\mathbf U)>\varepsilon_rr\}\right|
 \leq\varepsilon_rT.
$$
Consequently, there exists one vector $\mathbf U^*\in\mathcal U$ for which
the set
$$
 \mathcal D_{\rm good}
 =\{d\in\mathcal D:A_{\rm amb}(d,\mathbf U^*)\leq\varepsilon_rr\}
$$
satisfies
\begin{equation}
 |\mathcal D_{\rm good}|\geq(1-\varepsilon_r)T.
 \label{eq:F-new-good-records}
\end{equation}
Fix this seed vector and the corresponding initial reservoir for the rest of
the proof.

\vskip 10pt

\noindent
{\bf Part 8. Completing the proof}

Given a final odd endpoint, we first recover all phase endpoints using
\eqref{eq:F-new-phase-recovery}.  The initial reservoir is known from
$\mathbf U^*$ and we decode the first growth phase backward using
\eqref{eq:F-new-reverse-block}. This recovers its outputs and hence the next
reservoir. We repeat the process through the growth phases and finally
through the terminal phase.

At a nonambiguous block, the remainder in \eqref{eq:F-new-low-bits} determines
the source triple uniquely among the corresponding enlarged set $\mathcal W$.
At an ambiguous block, there are at most $r^2L$ candidates, and each candidate
determines $u_i$ and the preceding endpoint.  We can organize all possible backward reconstructions in a rooted tree whose root is
the given final odd endpoint.  In this tree, a node represents a partial reconstruction,
and its children correspond to the candidate triples that produce a valid
preceding block.  A leaf therefore represents a complete choice record
producing the prescribed endpoint. 
 We retain only branches that end at records in $\mathcal D_{\rm good}$.  If a
node on the path associated with a good record has at least two surviving
children, then the chosen triple of that record has the same residue modulo
$2^K$ as another triple in its phase-start set $\mathcal W$.  The
corresponding block is therefore ambiguous for that record.

Consequently, every good root-to-leaf path contains at most
$\lfloor \varepsilon_rr\rfloor$ branching nodes.  Each branching node has at most $r^2L$
children, while steps with a unique continuation do not increase the number
of leaves.  It follows that every fixed final odd endpoint is produced by at
most
\begin{equation}
 (r^2L)^{\varepsilon_rr}=e^{o(r)}
 \label{eq:F-new-multiplicity}
\end{equation}
good records, by \eqref{eq:F-new-epsilon}.

It follows from \eqref{eq:F-new-T}, \eqref{eq:F-new-good-records}, and
\eqref{eq:F-new-multiplicity} that the number of distinct final odd endpoints
is at least
$$
 \frac{(1-\varepsilon_r)T}{\exp(o(r))}
 \geq m^re^{-O_c(r)}.
$$
Finally, the injective normalization \eqref{eq:F-new-normalization} produces
the same number of integers in $[2^m,2^{m+1})$, each with an addition chain of
length at most $m+r$.  Hence, for some $C>0$ and all sufficiently large
$m$,
$$
 F(m,r)\geq C^rm^r,
$$
which proves the required lower bound in Theorem~\ref{t2}.

\medskip
\noindent
\textbf{Remark.}
A \emph{Brauer chain} is an addition chain in which
each step has the form $a_j=a_{j-1}+a_t$ for some $t<j$.
Let $\ell_*(n)$ be the minimum length of such a chain and put
$$
 F_*(m,r)=\#\{n\in[2^m,2^{m+1}):\ell_*(n)\leq m+r\}.
$$
Every chain produced by the preceding construction is a Brauer chain. Therefore, the same
lower bound holds for $F_*(m,r)$. Since $F_*(m,r)\leq F(m,r)$, Theorem~\ref{t2}
also gives the corresponding upper bound.

\section{Proving the lower bound in  Theorem \ref{t1}}

The proof of the lower bound in Theorem \ref{t1} is also  constructive. This proof will be simpler than that of the lower bound in Theorem \ref{t2}. We construct addition chains (which will not all yield distinct integers) in the following manner.
\begin{enumerate}
\item The addition chains will contain $m$ doubling steps and $r$ additive steps.  First, we choose the locations of the additive steps in the chain. We begin the chain by a sequence of $\lfloor m/2\rfloor$ doubling steps. We choose the locations of the additive steps among the $\lceil m/2+r\rceil$ other steps. The number of ways to make this choice is
$$
{\lceil m/2\rceil+r \choose r}\ge K^r\left(\frac{m}{r}\right)^r
$$
for some positive constant $K$.
\item For each additive step, we choose the added element among the first $\lfloor m/4\rfloor$ elements of the chain.  The number of ways to choose the added elements is thus
$$
\lfloor m/4\rfloor ^r.
$$
\end{enumerate}

From steps 1 and 2 above, it follows that the number of ways of constructing addition chains in such a way is thus at least
$$
(K/8)^r \left(\frac{m^2}{r}\right)^r.
$$
It is also clear that each sequence of choices will lead to a distinct addition chain. Furthermore, since we have $m$ doubling steps, the resulting integer is larger  than $2^m$.  It remains to verify that the resulting integer will be smaller than $2^{m+1}$. 
By construction, each additive step adds one of
$1,2,2^2,\ldots,2^{\lfloor m/4\rfloor-1}$
to a current term at least $2^{\lfloor m/2\rfloor}$.
Thus each additive step increases the current term by a factor
at most
$$
1+2^{\lfloor m/4\rfloor-1-\lfloor m/2\rfloor}
\le 1+2^{-m/4}.
$$
It follows that, in total, the additive steps will make the chain grow by a factor which is less than or equal to
$$
(1+2^{-m/4})^r.
$$
Since $r\le m$,  this factor is less than 2 for $m$ sufficiently large, which guarantees that the endpoint of the chain will be smaller than $2^{m+1}$. This completes the proof of the lower bound in Theorem \ref{t1}.

\vfill
\noindent

\end{document}